\newcommand{\C}{{\mathbb C}}
\newcommand{\Q}{{\mathbb Q}} 
\newcommand{\Z}{{\mathbb Z}}
\newcommand{\FF}{{\mathbb F}}
\newcommand{\Fts}{F^{\times\,2}}
\newcommand{\fa}{{\mathfrak a}}
\newcommand{\fd}{{\mathfrak d}}
\newcommand{\frf}{{\mathfrak f}}
\newcommand{\fl}{{\mathfrak l}}
\newcommand{\fm}{{\mathfrak m}}
\newcommand{\fn}{{\mathfrak n}}
\newcommand{\fp}{{\mathfrak p}}
\newcommand{\fq}{{\mathfrak q}}
\newcommand{\cO}{{\mathcal O}}
\newcommand{\gen}{{\operatorname{gen}}}
\newcommand{\sep}{{\operatorname{sep}}} 
\newcommand{\Hom}{\operatorname{Hom}} 
\newcommand{\Disc}{{\operatorname{Disc}\,}}
\newcommand{\Dir}{{\operatorname{Dir}}}
\newcommand{\Kr}{{\operatorname{Kr}}}
\newcommand{\rk}{{\operatorname{rk}\,}}
\newcommand{\Ram}{\operatorname{Ram}}
\newcommand{\Cl}{\operatorname{Cl}}
\newcommand{\SCl}{\mbox{\rm SCl}}
\newcommand{\Sep}{\mbox{\rm Sep}}
\newcommand{\disc}{\operatorname{disc}\,}
\newcommand{\im}{\operatorname{im}}
\newcommand{\Sel}{\operatorname{Sel}}
\newcommand{\eps}{\varepsilon}
\newcommand{\impl}{\Longrightarrow}
\newcommand{\hra}{\hookrightarrow}
\newcommand{\lra}{\longrightarrow}
\newcommand{\la}{\langle}
\newcommand{\ra}{\rangle}
\newcommand{\ov}{\overline}
\newcommand{\biota}{\ov{\iota}}
\newcommand{\bpi}{\ov{\pi}}
\def\rsp{\raisebox{0em}[2.6ex][1ex]{\rule{0em}{2ex}}}
\newcounter{lemmacount}[section]
\newtheorem{thm}[lemmacount]{Theorem}
\newtheorem{prop}[lemmacount]{Proposition}
\newtheorem{lem}[lemmacount]{Lemma}
\newtheorem{cor}[lemmacount]{Corollary}
\title{Dirichlet's Lemma in Number Fields}
\author{Franz Lemmermeyer}
\address{M\"orikeweg 1, 73489 Jagstzell, Germany}
\email{franz.lemmermeyer@gmx.de}
\begin{document}

\begin{abstract}
Dirichlet's Lemma states that every primitive quadratic Dirichlet 
character $\chi$ can be written in the form 
$\chi(n) = (\frac{\Delta}n)$ for a suitable quadratic discriminant
$\Delta$. In this article we define a group, the separant class
group $\SCl(F)$, that measures the extent to which Dirichlet's Lemma 
fails in general number fields $F$. As an application we will
show that over fields with trivial separant class groups, genus theory
of quadratic extensions can be made as explicit as over the rationals.
\end{abstract}

\maketitle

\begin{center}
{\large Dedicated to the memory of Peter Roquette (1927--2023)}
\end{center}

\medskip

In this article we will introduce the separant class group $\SCl(F)$
of a number field $F$ as a finite $2$-group measuring the obstruction
of Dirichlet characters to be induced by Kronecker symbols. We compute
its order and show that the separant class group is trivial if and only if
$F$ is totally real with odd class number in the strict sense.
In fields with trivial separant class groups, there is an analogue
of unique factorization into prime discriminants, and genus theory
can be made as explicit as over the rationals.

\section{Genus Theory of Quadratic Number Fields}

We begin by reviewing a few classical results concerning quadratic
number fields, the factorization of discriminants into prime discriminants,
and connections with genus theory.

Let $\Delta$ denote the discriminant of a quadratic number field $K$.
Quadratic discriminants (together with the discriminant $1$ of the trivial
extension $\Q/\Q$) form a group $\Disc(\Q)$ with respect to the following
multiplication: if $\Delta_1$ and $\Delta_2$ are two quadratic
discriminants, then  $\Delta_1 * \Delta_2 = \Delta_3$, where $\Delta_3$
is the discriminant of the extension $\Q(\sqrt{\Delta_1\Delta_2}\,)$.
The neutral element is the discriminant $1$ of the trivial
extension $\Q/\Q$. Observe that $\Delta * \Delta = 1$
(the group $\Disc(\Q)$ is an elementary abelian $2$-group) and that,
e.g., $(-8) * (-4) = 8$. If $\Delta_1$ and $\Delta_2$ are
coprime, then clearly $\Delta_1 * \Delta_2 = \Delta_1\Delta_2$.

Prime discriminants are discriminants having only one prime factor, namely
\begin{itemize}
\item the discriminants $-4$, $\pm 8$ of $\Q(\sqrt{-1}\,)$ and
  $\Q(\sqrt{\pm 2}\,)$;
\item the discriminant $p$ of $\Q(\sqrt{p}\,)$ for primes $p \equiv 1 \bmod 4$;
\item the discriminant $-q$ of $\Q(\sqrt{-q}\,)$
  for primes $q \equiv 3 \bmod 4$.
\end{itemize}
It is well known and easy to show that every discriminant $\Delta$ can
be written uniquely as a product of prime discriminants:
$$ \Delta = \Delta_1 \cdots \Delta_r \Delta_{r+1} \cdots \Delta_t, $$
where we assume that $\Delta_j < 0$ for $j \le r$ and $\Delta_j > 0$ for
$j > r$. In this case, 
$$ K_\gen^+ = \Q(\sqrt{\Delta_1}, \ldots, \sqrt{\Delta_t}\,) $$
is the genus class field of $K$ in the strict sense, that is, the
maximal abelian extension (necessarily an elementary abelian
$2$-extension) $L/\Q$ such that $L/K$ is unramified at all finite
places. The ordinary genus field $K_\gen$ of $K$, i.e., the maximal
unramified extension of $K$ that is abelian over $\Q$, is then given by
$$ K_\gen = \Q(\sqrt{\Delta_1\Delta_2}, \ldots, \sqrt{\Delta_1\Delta_r},
\sqrt{\Delta_{r+1}}, \ldots, \sqrt{\Delta_t}\,) $$
  if $\Delta > 0$ (for negative discriminants, $K_\gen = K_\gen^+$).
Observe that the genus field $K_\gen$ in the usual sense is the maximal
real subfield of $K_\gen^+$.
  
\medskip\noindent{\bf Example 1.}
The quadratic number field $K = \Q(\sqrt{210}\,)$ has discriminant
$\Delta = 840$, and the factorization of $\Delta$ into prime discriminants is
$$ \Delta = 8 \cdot (-3) \cdot 5 \cdot (-7), $$
hence the genus field of $K$ in the strict and in the usual sense is given 
respectively by
$$ K_\gen^+ = \Q(\sqrt{2}, \sqrt{-3}, \sqrt{5}, \sqrt{-7}\,)
\quad \text{and} \quad
    K_\gen = \Q(\sqrt{2}, \sqrt{5}, \sqrt{21}\,). $$

A quadratic number field has odd class number (in the usual sense)
if and only if $K_\gen = K$. This condition is obviously necessary;
for sufficiency observe that if $K$ has even class number, then its
Hilbert $2$-class field can be constructed in central steps, i.e., there
exists a central unramified quadratic extension of $K$, which necessarily
is elementary abelian over $\Q$ and thus is contained in the genus class field
of $K$ (for details, see \cite{Fro}). The explicit description of the genus
field then immediately implies the following classification of quadratic
number fields with odd class number:

\begin{thm}
The quadratic extensions $\Q(\sqrt{m}\,)$ with odd class number are
      \begin{itemize}
      \item the complex quadratic number fields
        $\Q(\sqrt{-1}\,)$, $\Q(\sqrt{-2}\,)$, and $\Q(\sqrt{-q}\,)$
            for primes $q \equiv 3 \bmod 4$;
      \item the real quadratic number fields $\Q(\sqrt{p}\,)$ for
            arbitrary primes $p$, and the fields $\Q(\sqrt{pq}\,)$
            for primes $p$ and $q$, where $p = 2$ or $p \equiv 3 \bmod 4$
            and $q \equiv 3 \bmod 4$.
      \end{itemize}
\end{thm}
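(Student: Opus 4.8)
The plan is to deduce the classification directly from the two ingredients recalled above: that a quadratic field $K$ has odd class number in the usual sense precisely when $K_\gen = K$, and the explicit description of $K_\gen$ in terms of the factorization $\Delta = \Delta_1 \cdots \Delta_t$ of the discriminant of $K$ into prime discriminants. So the whole problem reduces to deciding for which $\Delta$ every generator displayed in the formula for $K_\gen$ already lies in $K = \Q(\sqrt{\Delta}\,)$.

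The elementary fact I would isolate first is that if $d$ is a product of pairwise coprime prime discriminants, then $d$ is itself a fundamental discriminant, so $\sqrt d \in \Q(\sqrt\Delta\,)$ if and only if $d = 1$ or $d$ and $\Delta$ are built from exactly the same prime discriminants; the ``only if'' here is just the uniqueness of factorization into prime discriminants. Feeding this into the genus field formula generator by generator --- handling $\Delta < 0$, where $K_\gen = K_\gen^+ = \Q(\sqrt{\Delta_1},\dots,\sqrt{\Delta_t}\,)$, separately from $\Delta > 0$ --- shows that $K_\gen = K$ holds in exactly three situations: (i) $\Delta < 0$ and $t = 1$; (ii) $\Delta > 0$ with all $\Delta_j > 0$ and $t = 1$; (iii) $\Delta > 0$ with some $\Delta_j < 0$, hence an even number $r \ge 2$ of them, and $t = r = 2$. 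In case (iii) the only surviving generator is $\sqrt{\Delta_1\Delta_2}$, and it survives because $\Delta_1\Delta_2 = \Delta$; any further prime discriminant factor, of either sign, would produce a generator outside $K$.

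It then remains to run through the prime discriminants $-4$, $\pm 8$, the primes $p \equiv 1 \bmod 4$, and the $-q$ with $q \equiv 3 \bmod 4$, and translate each admissible $\Delta$ into a field. Case (i) gives $\Q(\sqrt{-1}\,)$, $\Q(\sqrt{-2}\,)$, $\Q(\sqrt{-q}\,)$; case (ii) gives $\Q(\sqrt 2\,)$ and $\Q(\sqrt p\,)$ with $p \equiv 1 \bmod 4$; and in case (iii), $\Delta$ is a product of two \emph{distinct} --- hence coprime --- negative prime discriminants, so $\Delta \in \{(-4)(-q),\,(-8)(-q),\,(-q_1)(-q_2)\}$, that is, $\Q(\sqrt q\,)$, $\Q(\sqrt{2q}\,)$, $\Q(\sqrt{q_1q_2}\,)$ with all of $q, q_1, q_2 \equiv 3 \bmod 4$ (the pair $\{-4,-8\}$ is not coprime, and in fact $(-4) * (-8) = 8$ already appeared in case (ii)). Merging the real fields, $\Q(\sqrt 2\,)$, the $\Q(\sqrt p\,)$ with $p \equiv 1 \bmod 4$, and the $\Q(\sqrt q\,)$ with $q \equiv 3 \bmod 4$ together constitute ``$\Q(\sqrt p\,)$ for an arbitrary prime $p$'', while $\Q(\sqrt{2q}\,)$ and $\Q(\sqrt{q_1q_2}\,)$ constitute ``$\Q(\sqrt{pq}\,)$ with $p = 2$ or $p \equiv 3 \bmod 4$ and $q \equiv 3 \bmod 4$'' --- exactly the asserted list. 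The only real subtlety is this last piece of bookkeeping: one must keep in mind that $\Q(\sqrt p\,)$ with $p \equiv 1 \bmod 4$ has the single prime discriminant $p$ (so it belongs to case (ii)), whereas $\Q(\sqrt q\,)$ with $q \equiv 3 \bmod 4$ has discriminant $4q = (-4)(-q)$ with two prime discriminant factors and nonetheless odd class number because both are negative and $t = r = 2$. Everything else follows by inspection from the explicit genus field description.
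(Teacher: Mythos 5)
Your proposal is correct and follows exactly the route the paper intends: the paper's entire argument is the remark that odd class number is equivalent to $K_\gen = K$ together with the assertion that the explicit genus field formula ``immediately implies'' the classification, and your case analysis (with $t=1$ for $\Delta<0$, $t=1$ with all factors positive, or $t=r=2$ with both factors negative, and the observation that $-4$ and $-8$ cannot co-occur in a prime discriminant factorization) is precisely the bookkeeping the paper leaves to the reader. No discrepancies.
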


\subsection*{Genus characters}
Given an ideal class $c \in \Cl^+(K)$, choose an ideal $\fa \in c$ coprime to
$\Delta$. Then $\chi_j(c) = (\frac{\Delta_j}{N\fa})$ defines a quadratic
character on the ideal class group $\Cl^+(K)$ called a genus character.
The main result concerning genus characters is the

\begin{thm}[Principal Genus Theorem]
  An ideal class $c \in \Cl^+(K)$ is a square if and only if
  $\chi_j(c) = 1$ for $j = 1, \ldots, t$.
\end{thm}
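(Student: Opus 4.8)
\medskip
\noindent\textbf{Proof strategy (proposal).}\enspace
Write $\Phi\colon\Cl^+(K)\to\{\pm1\}^t$ for $\Phi(c)=(\chi_1(c),\dots,\chi_t(c))$. The plan is to recognize $\Phi$ as (essentially) the Artin map of $K$ onto $\Gal(K_\gen^+/K)$ and then to play the index $[\Cl^+(K):\ker\Phi]$ off against $[\Cl^+(K):\Cl^+(K)^2]$. The ``only if'' direction is immediate: if $c=d^2$ and $\fb\in d$ is chosen coprime to $\Delta$, then $\fb^2$ represents $c$ and $\chi_j(c)=\big(\tfrac{\Delta_j}{N\fb^2}\big)=\big(\tfrac{\Delta_j}{N\fb}\big)^2=1$ for all $j$, so $\Cl^+(K)^2\subseteq\ker\Phi$. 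All the content is in the reverse inclusion.

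Since $K_\gen^+/K$ is unramified at all finite places, $K_\gen^+$ lies in the Hilbert class field $H^+(K)$ of $K$ in the strict sense, and the Artin map furnishes a surjection $\Cl^+(K)\cong\Gal(H^+(K)/K)\twoheadrightarrow\Gal(K_\gen^+/K)$. As the prime discriminants $\Delta_1,\dots,\Delta_t$ are pairwise coprime (at most one being even) they are independent modulo squares, whence $[\Q(\sqrt{\Delta_1},\dots,\sqrt{\Delta_t}):\Q]=2^t$ and $[K_\gen^+:K]=2^{t-1}$; moreover, assigning to $\tau\in\Gal(K_\gen^+/K)$ its actions on $\sqrt{\Delta_1},\dots,\sqrt{\Delta_t}$ identifies $\Gal(K_\gen^+/K)$ with the index-$2$ subgroup $\{(x_j):\prod_j x_j=1\}$ of $\{\pm1\}^t$, the single relation reflecting $\sqrt{\Delta_1\cdots\Delta_t}=\sqrt\Delta\in K$. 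The crucial point is the Frobenius computation: for a prime $\fp$ of $K$ above a rational prime $p\nmid\Delta$, the Artin automorphism of $\fp$ in $K_\gen^+/K$ sends $\sqrt{\Delta_j}$ to $\big(\tfrac{\Delta_j}{N\fp}\big)\sqrt{\Delta_j}$; this follows from the consistency of the Artin symbol in the tower $\Q\subseteq K\subseteq K(\sqrt{\Delta_j})$, which identifies the Artin symbol of $\fp$ in $K(\sqrt{\Delta_j})/K$ with that of $N\fp$ in $\Q(\sqrt{\Delta_j})/\Q$, and the latter acts through the Kronecker symbol $\big(\tfrac{\Delta_j}{N\fp}\big)$. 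Extending this multiplicatively over ideals shows that the composite $\Cl^+(K)\xrightarrow{\ \mathrm{Art}\ }\Gal(K_\gen^+/K)\hookrightarrow\{\pm1\}^t$ is precisely $\Phi$; in particular $\Phi$ maps onto the index-$2$ subgroup, so $[\Cl^+(K):\ker\Phi]=2^{t-1}$.

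To close the argument I still need $[\Cl^+(K):\Cl^+(K)^2]=2^{t-1}$, for then $\Cl^+(K)^2\subseteq\ker\Phi$ together with equality of the two indices forces $\ker\Phi=\Cl^+(K)^2$. Let $M\subseteq H^+(K)$ be the fixed field of $\Cl^+(K)^2$, so $\Gal(M/K)\cong\Cl^+(K)/\Cl^+(K)^2$ is elementary abelian and $M/\Q$ is Galois. From $\fa\,\fa^\sigma=(N\fa)$ with $N\fa>0$ one sees that the non-trivial $\sigma\in\Gal(K/\Q)$ acts on $\Cl^+(K)$ by inversion, hence trivially on $\Cl^+(K)/\Cl^+(K)^2$; therefore $\Gal(M/K)$ is central in $\Gal(M/\Q)$, and since $\Gal(M/\Q)/\Gal(M/K)\cong\Gal(K/\Q)$ is cyclic, $\Gal(M/\Q)$ is abelian. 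Thus $M/\Q$ is abelian with $M/K$ unramified at all finite places, so $M\subseteq K_\gen^+$ by the defining property of the genus class field; comparing degrees gives $[\Cl^+(K):\Cl^+(K)^2]=[M:K]\le2^{t-1}$, while $\Cl^+(K)^2\subseteq\ker\Phi$ gives $[\Cl^+(K):\Cl^+(K)^2]\ge[\Cl^+(K):\ker\Phi]=2^{t-1}$.

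The hard part will be the Frobenius computation identifying $\chi_j$ with the action of the Artin symbol on $\sqrt{\Delta_j}$ — that is, making the consistency property of the Artin map in the tower $\Q\subseteq K\subseteq K(\sqrt{\Delta_j})$ explicit, including the case of a prime $p$ inert in $K$ (where $N\fp=p^2$ and the exponents must be checked). Everything else is routine: the easy inclusion, the degree count for $K_\gen^+$, and the elementary fact that a group with a cyclic central quotient is abelian. If one prefers to keep class field theory to a minimum, the surjectivity of $\Phi$ onto the index-$2$ subgroup can instead be obtained by exhibiting, for each $i$, a degree-one prime ideal of $K$ on which $\chi_i=-1$ while the remaining $\chi_j$ realize prescribed values compatible with $\prod_j\chi_j=1$ (a consequence of Dirichlet's theorem on primes in arithmetic progressions), and $[\Cl^+(K):\Cl^+(K)^2]=2^{t-1}$ can be quoted from Gauss's ambiguous class number formula.
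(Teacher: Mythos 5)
Your proposal is correct, but note that the paper itself offers no proof of this statement: it appears in the introductory section as a quoted classical result, together with the exact sequence $1 \to \Cl^+(K)^2 \to \Cl^+(K) \to \mu_2^t \to \mu_2 \to 1$, which is exactly what you establish. The nearest thing to a proof in the paper is the argument for the generalized Principal Genus Theorem in the section on genus theory (over a base field with trivial separant class group): there the author simply observes that $\chi_j(c)=1$ for all $j$ means the Frobenius of $c$ acts trivially on $K_\gen^+$, and then quotes the fact that $K_\gen^+$ is the maximal elementary abelian $2$-extension inside the Hilbert class field in the strict sense, so that splitting completely there is equivalent to $[\fp]$ being a square. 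Your write-up supplies precisely the two ingredients that short argument takes for granted: the Frobenius computation identifying $X=(\chi_1,\dots,\chi_t)$ with the Artin map onto $\Gal(K_\gen^+/K)\cong\{(x_j):\prod_j x_j=1\}$ (which, as a bonus, gives the well-definedness of the genus characters on classes for free), and the index count $[\Cl^+(K):\Cl^+(K)^2]=2^{t-1}$ via the central-extension argument showing that the fixed field of $\Cl^+(K)^2$ is abelian over $\Q$ and hence contained in $K_\gen^+$. The latter is exactly the ``central step'' idea the paper sketches just before Theorem 1.1 when characterizing fields with odd class number, so your route is in the spirit the author intends; the trade-off is that your version is self-contained and yields the full exact sequence, at the cost of being considerably longer than the class-field-theoretic one-liner used for the generalized statement. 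I see no gaps; the points you flag (the inert case $N\fp=p^2$, and the degree count $[K_\gen^+:K]=2^{t-1}$ from the independence of the pairwise coprime prime discriminants modulo squares) do check out as you anticipate.
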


In fact, the homomorphism $\Cl^+(K) \lra \mu_2^t$ (where $\mu_2 = \{-1, +1\}$)
induced by $X = (\chi_1, \ldots, \chi_t)$ gives rise to an exact sequence
$$ \begin{CD}
  1 @>>> \Cl^+(K)^2 @>>> \Cl^+(K) @>{X}>> \mu_2^t @>>> \mu_2 @>>> 1.
  \end{CD} $$

Work by Goldstein \cite{Gold}
and Sunley \cite{Sun72,Sun79} (see also Davis \cite{Dav78}) shows that,
for a base field $F$, unique factorization into prime discriminants holds
if and only if $F$ is totally real and has class number $1$ in the strict
sense. Replacing discriminants by separants we will be able to generalize
these results to fields with trivial separant class group, i.e., to totally
real fields with odd class number in the strict sense (see Theorem \ref{TM2}). 
The notion of separants goes back to the author's dissertation \cite{Lem95}, 
where it was used to generalize R\'edei's construction of unramified
cyclic quartic extensions.

The construction of cyclic quartic unramified extensions of
$\Q(\sqrt{\Delta}\,)$  is also based on the factorization of the
discriminant $\Delta$ into prime discriminants:

\begin{thm}
  Let $\Delta$ be the discriminant of a quadratic number field $K$. Then
  the following assertions are equivalent:
  \begin{enumerate}
  \item[(1)] $K$ admits a cyclic quartic extension $L/K$ containing
    $\Q(\sqrt{\Delta_1},\sqrt{\Delta_2}\,)$ and unramified at all finite
    primes.
  \item[(2)]  There exists a factorization $\Delta = \Delta_1\Delta_2$
    into coprime discriminants $\Delta_1$ and $\Delta_2$, and the primes
    dividing $\Delta_1$ split completely in $\Q(\sqrt{\Delta_2}\,)$ and
    vice versa.
  \end{enumerate}
\end{thm}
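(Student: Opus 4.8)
The plan is to establish the two implications separately, the substantive one being $(2)\Rightarrow(1)$, which I would prove by R\'edei's explicit construction of a cyclic quartic field from a solution of a conic, and $(1)\Rightarrow(2)$ by reading the splitting conditions off the Galois-theoretic and local structure of $L$. Throughout write $k_1=\Q(\sqrt{\Delta_1}\,)$, $k_2=\Q(\sqrt{\Delta_2}\,)$ and $M=\Q(\sqrt{\Delta_1},\sqrt{\Delta_2}\,)$, so that $K$ is the third quadratic subfield of the biquadratic field $M$; let $\sigma$ generate $\Gal(M/K)$, so $\sigma$ fixes $K=\Q(\sqrt\Delta\,)$ and negates both $\sqrt{\Delta_1}$ and $\sqrt{\Delta_2}$.

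\emph{$(2)\Rightarrow(1)$.} Assume the coprime factorization $\Delta=\Delta_1\Delta_2$ with the stated splitting conditions. First I would recast those conditions via Hilbert symbols: for an odd $p\mid\Delta_1$ (so $p\nmid\Delta_2$ and $v_p(\Delta_1)=1$) the statement ``$p$ splits completely in $k_2$'' reads $(\Delta_1,\Delta_2)_p=(\frac{\Delta_2}{p})=1$, symmetrically for $q\mid\Delta_2$; if $2\mid\Delta_1$ it gives $\Delta_2\equiv1\bmod 8$, hence $\Delta_2\in\Q_2^{\times2}$ and $(\Delta_1,\Delta_2)_2=1$; and if both $\Delta_i$ are odd, $(\Delta_1,\Delta_2)_2=1$ holds automatically since odd discriminants are $\equiv1\bmod 4$. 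Thus $(\Delta_1,\Delta_2)_v=1$ at every finite $v$, hence also at $\infty$ by Hilbert reciprocity, so Hasse--Minkowski produces a primitive integral point $(x,y,z)$ on $\Delta_1X^2+\Delta_2Y^2=Z^2$, with $y\neq0$ since $\Delta_1$ is not a square. Put $\mu=z+x\sqrt{\Delta_1}\in k_1$ and $L=M(\sqrt\mu\,)$. Using $\mu\,\sigma(\mu)=z^2-\Delta_1x^2=\Delta_2y^2$ one computes
$$\frac{\sigma(\mu)}{\mu}=\frac{\Delta_2y^2}{\mu^2}=\Big(\frac{\sqrt{\Delta_2}\,y}{\mu}\Big)^2\in M^{\times2},\qquad N_{M/K}\Big(\frac{\sqrt{\Delta_2}\,y}{\mu}\Big)=\frac{(-\Delta_2)y^2}{\Delta_2y^2}=-1,$$
so $L/K$ is Galois, and the classical criterion separating $\Z/4$ from $(\Z/2)^2$ over a quadratic base (a short Kummer-theory computation with $\sigma$) yields $\Gal(L/K)\cong\Z/4$; the degenerate case $\mu\in M^{\times2}$ is impossible, as it would make $\Delta_2$ a square in $K$. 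It remains to verify that $L/K$ is unramified at all finite primes. Since adjoining a square root of a unit is unramified in odd residue characteristic, $M/K$ is automatically unramified away from $2$, and $\Delta_2\in\Q_2^{\times2}$ (resp.\ $\Delta_1\in\Q_2^{\times2}$) takes care of the dyadic primes, so $M/K$ is unramified at all finite primes; for $L/M$ one reads from $N_{k_1/\Q}(\mu)=\Delta_2y^2$ and primitivity of $(x,y,z)$ that the ideal $(\mu)$ in $M$ is a square ideal away from the primes above $2$ (the prime $(\sqrt{\Delta_2}\,)$ entering only through a unit square class and contributing nothing to the relative different), and a final local computation at the dyadic primes --- after replacing $\mu$ by $\mu$ times a suitable square of $M^\times$ --- completes the check.

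\emph{$(1)\Rightarrow(2)$.} Conversely, let $L/K$ be cyclic of degree $4$, unramified at all finite primes, with $M\subseteq L$. Then $\Delta_1$ and $\Delta_2$ are coprime, for a common prime factor would be unramified in $K$ but ramified in $M/K$, contradicting $M\subseteq L$; hence $\Delta=\Delta_1\Delta_2$. Since $L/K$ is abelian and unramified, $L$ lies in the narrow Hilbert class field of $K$, on which the nontrivial automorphism of $K/\Q$ acts by inversion; hence $L/\Q$ is Galois with $\Gal(L/\Q)\cong D_4$ (the nontrivial automorphism of $K/\Q$ inverts $\Gal(L/K)\cong\Z/4$), so $L$ has non-normal quartic subfields $F_1=k_1(\sqrt{\nu_1}\,)\supseteq k_1$ and $F_2=k_2(\sqrt{\nu_2}\,)\supseteq k_2$. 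Now $\nu_1$ and its $\Gal(k_1/\Q)$-conjugate $\nu_1'$ satisfy $\nu_1\nu_1'=N_{k_1/\Q}(\nu_1)\in\Q^\times$, so $k_1(\sqrt{\nu_1\nu_1'}\,)=k_1(\sqrt{N_{k_1/\Q}(\nu_1)}\,)$ is the unique one of the three quadratic extensions of $k_1$ inside $L$ that is Galois over $\Q$, hence equals $M=k_1(\sqrt{\Delta_2}\,)$; therefore $N_{k_1/\Q}(\nu_1)\equiv\Delta_2$ modulo squares in $k_1^\times$, which forces $v_q(N_{k_1/\Q}(\nu_1))$ to be odd for every odd $q\mid\Delta_2$ (using $q\nmid\Delta_1$, i.e.\ coprimality). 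An inert $q$ would make that valuation even, so $q$ splits in $k_1=\Q(\sqrt{\Delta_1}\,)$, and symmetrically every odd $p\mid\Delta_1$ splits in $k_2=\Q(\sqrt{\Delta_2}\,)$. The dyadic parts of $(2)$ ($\Delta_2\equiv1\bmod 8$ when $2\mid\Delta_1$, and symmetrically) come out separately from the unramifiedness of $M/K$ at the primes above $2$ by chasing square classes in $K_\fp$. (One can also route the whole equivalence through the genus-theory exact sequence above, translating the existence of a $\Z/4$-quotient of $\Cl^+(K)$ whose reduction mod $2$ is the genus character $\prod_{i\in S_1}\chi_i$ of the partition $\{D_i\}=S_1\sqcup S_2$ of the prime discriminants, with $\Delta_j=\prod_{i\in S_j}D_i$, into R\'edei's rank condition on a matrix of Legendre symbols, which unwinds to $(2)$.)

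\emph{Main obstacle.} The delicate point in both directions is the ramification bookkeeping at the primes above $2$: there a square root of a unit may be ramified, the group of local square classes is larger, and one must track congruences modulo $8$ (and modulo the local different of $K_\fp/\Q_2$), both to choose $\mu$ so that $L/K$ becomes genuinely unramified and, conversely, to recover $\Delta_2\equiv1\bmod 8$ from unramifiedness. The archimedean place, by contrast, costs nothing: once the finite Hilbert symbols vanish, reciprocity gives $(\Delta_1,\Delta_2)_\infty=1$ for free.
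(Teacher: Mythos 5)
The paper itself offers no proof of this statement: it is quoted in the introductory survey as R\'edei's classical criterion, so there is nothing in-paper to compare your route against, and I am judging the proposal on its own terms. Your architecture is the standard one (Hilbert symbols and Hasse--Minkowski to solve $\Delta_1X^2+\Delta_2Y^2=Z^2$, the Kummer-theoretic norm-equals-$-1$ criterion for $\Z/4$ over $M$, and the dihedral structure of $L/\Q$ for the converse), and the bookkeeping at odd primes in both directions is essentially correct. But there are genuine gaps, concentrated exactly where you locate the difficulty. In $(2)\Rightarrow(1)$ you never actually produce a $\mu$ making $L/M$ unramified above $2$: ``replacing $\mu$ by $\mu$ times a suitable square of $M^\times$'' changes nothing, since $M(\sqrt{\mu})=M(\sqrt{\mu\lambda^2})$. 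What is really needed is a normalization of the primitive solution $(x,y,z)$ (parity and congruence conditions, possibly passing to $2\mu$ or $-\mu$), or equivalently an argument that among the quadratic twists of $L$ over $K$ some twist is unramified at the dyadic primes without acquiring ramification elsewhere. That is the crux of R\'edei's proof and it is absent.

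In $(1)\Rightarrow(2)$ the claim that the dyadic splitting conditions ``come out from the unramifiedness of $M/K$ at the primes above $2$'' is false. Take $\Delta_1=8$, $\Delta_2=-3$, $K=\Q(\sqrt{-6}\,)$: then $M=\Q(\sqrt{2},\sqrt{-3}\,)$ is the genus field of $K$ and is unramified over $K$ at all finite primes, yet $2$ is inert in $\Q(\sqrt{-3}\,)$ because $-3\equiv 5\bmod 8$. The dyadic condition must be extracted from the full quartic layer, e.g.\ from your own relation $N_{k_1/\Q}(\nu_1)\in\Delta_2\Q^{\times 2}\cup\Delta\Q^{\times 2}$ read $2$-adically: it says $\Delta_2$ (up to the ambiguity by $\Delta_1$) is a local norm from $k_1\otimes\Q_2$, which gives $(\Delta_1,\Delta_2)_2=1$ and hence $\Delta_2\equiv 1\bmod 8$ when $2\mid\Delta_1$. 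Two smaller points: inversion of $\Gal(L/K)$ by $\Gal(K/\Q)$ only yields $\Gal(L/\Q)\in\{D_4,Q_8\}$; to exclude $Q_8$ (and thus to have the non-normal quartic subfields $F_1$, $F_2$ you use) you need the extension $1\to\Gal(L/K)\to\Gal(L/\Q)\to\Gal(K/\Q)\to 1$ to split, which follows from the order-two inertia subgroup at a finite prime ramified in $K/\Q$ but not in $L/K$. And the norm relation only pins down $N_{k_1/\Q}(\nu_1)$ modulo $k_1^{\times 2}\cap\Q^{\times}=\Q^{\times 2}\cup\Delta_1\Q^{\times 2}$, so the alternative $N_{k_1/\Q}(\nu_1)\equiv\Delta\bmod{\Q^{\times2}}$ has to be carried along (harmlessly, since $v_q(\Delta)=v_q(\Delta_2)$ for $q\nmid\Delta_1$, but it should be said).
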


Our results will allow us to generalize this construction to totally
real base fields $F$ with odd class number in the strict sense.

\section{Dirichlet's Lemma in $\Q$}

Let us now look at Kronecker symbols  $(\frac{\Delta}p)$, which
describe the splitting of primes in the quadratic number field with
discriminant $\Delta$, and quadratic Dirichlet characters defined modulo $m$.
Given a quadratic extension $K/\Q$ with discriminant $\Delta$, we define the
Kronecker symbol by setting
$$ \Big(\frac{\Delta}p\Big) = \begin{cases}
    +1 & \text{ if $p$ splits in $K/\Q$},      \\
      0  & \text{ if $p$ ramifies in $K/\Q$},   \\
     -1 & \text{ if $p$ is inert in   $K/\Q$}.
\end{cases} $$
Observe that the Kronecker symbol only depends on the ideal generated by $p$
in the sense that $(\frac{\Delta}p) = (\frac{\Delta}{-p})$.

Quadratic Dirichlet characters, on the other hand, are homomorphisms from
$(\Z/m\Z)^\times$  to the (multiplicatively written) group
$\mu_2 \simeq \Z/2\Z$. We identify Dirichlet characters modulo $m$
and modulo $n$ if they agree on all integers coprime to $mn$.

A quadratic Dirichlet character defined modulo $m$ is called primitive if
it cannot be defined for any proper divisor of $n$. The three nontrivial
quadratic Dirichlet characters defined modulo $8$ are
$$ \begin{array}{c|rrrr}
   \rsp          & 1 & 3 & 5 & 7 \\ \hline 
   \rsp \chi_8   & +1 & -1 & -1 & +1 \\ 
   \rsp \chi_{-8} & +1 & +1 & -1 & -1 \\
   \rsp \chi_{-4} & +1 & -1 & +1 & -1  
\end{array} $$
Their values at odd positive integers agree with $(\frac2p)$, $(\frac{-2}p)$
and $(\frac{-1}{p})$, respectively. The characters $\chi_8$ and
$\chi_{-8}$ are primitive, the character $\chi_{-4}$ is not as it is already
defined modulo $4$.

The smallest defining modulus of a character is called its conductor;
for Kronecker characters $\kappa_a = (\frac{a}{\cdot})$, the conductor
$N = |\Delta|$ agrees with the absolute value of the discriminant
$\Delta$ of the quadratic number field $\Q(\sqrt{a}\,)$.

The fact that Kronecker symbols $(\frac{\Delta}p)$ for primes $p > 0$
only depend on the residue class of $p$ modulo~$\Delta$ is essentially
equivalent to the quadratic reciprocity law. This observation assigns a
primitive quadratic Dirichlet character to each Kronecker symbol, and
Dirichlet's Lemma states that every primitive quadratic Dirichlet
character arises in this way (see below for a precise statement).

The same construction works for any algebraic number field, but it
turns out that, in general, not every primitive quadratic Dirichlet
character comes from a Kronecker symbol. For measuring this defect
we introduce the separant class group $\SCl(F)$ of a number field $F$,
and we will determine its structure.

We start by recalling Dirichlet's Lemma\footnote{The name ``Dirichlet's
  Lemma'' was coined by Cohn \cite{Coh}.}, which goes back to Dirichlet's
proof that there are infinitely many primes in arithmetic progressions.
In this proof, Dirichlet invented the characters that now bear his
name (strictly speaking Dirichlet provided the tools, and Dedekind
came up with the abstract concept of a character when he edited
Dirichlet's lectures). Dirichlet characters are group homomorphisms
$\chi: (\Z/m\Z)^\times \lra \C^\times$. Dirichlet's main problem was showing
that L-series
$$ L(s,\chi) = \sum_{n=1}^\infty \chi(n) n^{-s} $$
associated to Dirichlet characters $\chi$ have the property 
\begin{equation}\label{EDirL}
 \lim_{s \to 1} L(s,\chi) \ne 0
\end{equation}
for all characters $\chi$ different from the principal character.
This was not difficult except for quadratic characters $\chi$, i.e.,
characters that attain only the values $+1$
and $-1$. Eventually, Dirichlet realized that the limit in (\ref{EDirL})
for quadratic characters $\chi$ could be evaluated directly by showing
that every primitive quadratic Dirichlet character $\chi$ has the form
$\chi(n) = (\frac{\Delta}n)$ for some quadratic discriminant $\Delta$,
which allowed him to show that the limit in question is a product of
nonzero constants and the class number and the regulator of the quadratic
number field $\Q(\sqrt{D}\,)$. Again this is only true cum grano salis,
as Dirichlet worked with quadratic forms instead of fields. 

A Kronecker character $\kappa = (\frac{\Delta}n)$ is called
{\em modular} if there exists a modulus $N > 0$ such that
$\kappa(n) = \kappa(n+N)$ for all natural numbers $n$. It is easy to
see that if $\kappa$ is modular with respect to $N_1$ and $N_2$, then
it is modular with respect to $\gcd(N_1, N_2)$. The minimal modulus
is called the conductor of $\kappa$.

Similarly, a quadratic number field $K$ is called {\em modular} if
there exists an integer $N > 0$ such that $K$ is contained in the field
$\Q(\zeta_N)$ of $N$-th roots of unity. Again it is easy to see that if
$K$ is modular with respect to $N_1$ and $N_2$, then it is modular with
respect to $\gcd(N_1, N_2)$. The minimal modulus for $K$ is called the
conductor of $K$.

The similarity between these two notions of modularity is not superficial:
the splitting of primes $p$ in quadratic number fields $K$ with discriminant
$\Delta$ depends on the Kronecker character $\kappa(p) = (\frac{\Delta}{p})$,
that in cyclotomic fields $\Q(\zeta_N)$ on the values $\chi(p)$ for certain
Dirichlet characters $\chi$ defined modulo $N$. The modularity of $K$
can be shown to imply the modularity of $\kappa$, and in fact the
conductor of the Kronecker character $(\frac{\Delta}{\cdot})$ is equal to
the conductor $N = |\Delta|$ of the corresponding quadratic number field $K$.

\subsection*{Infinite Kronecker characters}
The Kronecker symbol $\kappa_4(n) = (\frac{-4}n)$ is a Dirichlet
character defined modulo $4$ on positive integers $n$ since
$$ \kappa_4(n) = \begin{cases}
                +1 & \text{ if } n \equiv 1 \bmod 4, \\
                 0 & \text{ if } n \equiv 0 \bmod 2, \\  
                -1 & \text{ if } n \equiv 3 \bmod 4.
                \end{cases} $$
If we think of the Kronecker symbol $\kappa_4(n)$ as a function of the
ideal $(n)$ generated by $n$, then we have to set $\kappa_4(-n) = \kappa_4(n)$.
But then $1 \equiv -3 \bmod 4$, yet we have $\kappa_4(1) = 1$ and
$\kappa_4(-3) = \kappa_4(3) = -1$. To get around this problem we
either consider our characters only on positive integers (and, over
number fields, on totally positive algebraic integers), or we
introduce an infinite Kronecker symbol 
$$ \kappa_\infty(n) = \Big(\frac{\infty}{n}\Big) = 
             \begin{cases} +1 & \text{ if } n > 0, \\
                           -1 & \text{ if } n < 0. \end{cases} $$
Then $\kappa_4(n) \kappa_\infty(n) = \chi(n)$ for all nonzero integers 
$n$, where $\chi$ is the nontrivial Dirichlet character modulo $4$.


Dirichlet showed that every Dirichlet character is a Kronecker
character; in fact, we even have 

\medskip\noindent{\bf Dirichlet's Lemma.}
{\em There is a bijection between primitive quadratic Dirichlet 
     characters modulo $N$ and Kronecker characters with conductor $N$.}
\medskip

The fact that, conversely, Kronecker characters $(\frac{\Delta}{\cdot})$ 
are Dirichlet characters defined modulo $|\Delta|$ is equivalent to
Euler's version of the quadratic reciprocity law (see \cite{Baum},
as well as, for a proof of Dirichlet's Lemma, \cite{LemQNF}).

\section{Dirichlet's Lemma in Number Fields}

We now define the groups of Kronecker characters $\Kr(F)$ and the
group of quadratic Dirichlet characters $\Dir(F)$ in arbitrary
number fields $F$; then we will show that each Kronecker character
induces a Dirichlet character, and define the separant class group
as the quotient group $\Dir(F)/\Kr(F)$.

\subsection*{Dirichlet and Kronecker characters}
A quadratic Dirichlet character in $F$ defined modulo an ideal
$\fm$ in $\cO_F$ is a homomorphism from the
coprime residue class group $(\cO_F/\fm)^\times$ to the group
$\mu_2 = \{-1, +1\}$. We identify Dirichlet characters that are defined
modulo $\fm$ and $\fn$ and agree on elements coprime to $\fm\fn$.

Each element $\alpha \in F^\times$ defines a Kronecker character
$\kappa_\alpha = (\frac{\alpha}{\cdot})$.  As in the special case
$F = \Q$, $\kappa$ attains the values $+1$, $0$ and $-1$
according as $\fp$ splits, ramifies or is inert in $K/F$, where
$K = F(\sqrt{\alpha}\,)$.

Since we want to compare Kronecker and Dirichlet characters, and since
Dirichlet characters are defined on elements of the number field, we
identify two Kronecker characters $\kappa_\alpha$ and $\kappa_\beta$
if they have the same values at all elements coprime to $\alpha$ and
$\beta$; for example, we identify the Kronecker symbols
$(\frac{-1}{\cdot})$ and $(\frac{-4}{\cdot})$ over $\Q$.

A less trivial example is the Kronecker character
$\kappa = (\frac{-1}{\cdot})$ in $F = \Q(\sqrt{-5}\,)$. We have
$\kappa(\fp) = -1$ for the prime ideal $\fp = (3,1+\sqrt{-5}\,)$
since this ideal is inert in the Hilbert class field $F(i)$ of $F$.
But $\kappa$ is trivial on principal ideals by the decomposition
theorem of class field theory, and in particular we identify
$\kappa$ with the trivial character.
Thus Kronecker characters are the restrictions of Artin symbols of
quadratic extensions $K/F$ to principal ideals.

We will evaluate Kronecker characters  $(\frac{\alpha}{\beta})$ only
at integers $\beta$ coprime to the relative discriminant $\disc(K/F)$,
where $K = F(\sqrt{\alpha}\,)$. For example, the Kronecker symbol
$(\frac{3}{m})$ is defined only for integers coprime to $6$.

Kronecker characters form a group $\Kr(F)$ with respect to the product
$\kappa_\alpha \kappa_\beta = \kappa_{\alpha\beta}$. If $\alpha \in F^\times$
is a square, then $\kappa_\alpha$ is the trivial character. This observation
implies that there is a homomorphism
\begin{equation}\label{Dphi}
  \phi: F^\times/F^{\times\,2} \lra \Kr(F)
\end{equation}
defined by $\phi(\alpha F^{\times\,2}) = \kappa_\alpha$.
Different elements in $F^\times/F^{\times\,2}$ do not necessarily
give rise to different Kronecker symbols; in other words: $\phi$
is not necessarily injective.

Recall that we have introduced the following groups in
\cite{LemSel}:
\begin{align*}
  \Sel(F)   & = \{\alpha \in F^\times: (\alpha) = \fa^2\}/F^{\times\,2}, \\
  \Sel_4(F) & = \{\alpha \in F^\times: (\alpha, 2) = (1),
  (\alpha) = \fa^2, \alpha \equiv \xi^2 \bmod 4\} \cdot F^{\times\,2}/F^{\times\,2}
\end{align*}
Thus $\Sel(F)$ is the group of all nonzero elements of $F$ that generate
the square of an ideal (modulo squares of elements), and $\Sel_4(F)$ is its 
subgroup of elements with odd norm that generate the square of an ideal and
are congruent to a square modulo $4$. Elements with odd norm congruent to
a square modulo $4$ are called $2$-primary; the quadratic extensions
$F(\sqrt{\alpha}\,)/F$ are unramified at $2$ when $\alpha$ is $2$-primary.

The next proposition determines the kernel of the homomorphism $\phi$
defined in (\ref{Dphi}):

\begin{prop}
  We have an exact sequence
  $$ \begin{CD}
    1 @>>> \Sel_4(F) @>{\iota}>> F^\times/F^{\times\,2}
                          @>{\phi}>> \Kr(F) @>>> 1. \end{CD} $$
\end{prop}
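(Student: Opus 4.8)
The plan is to verify exactness at the three spots of the sequence. Surjectivity of $\phi$, and the fact that it is a homomorphism, are immediate from the definitions: by construction every element of $\Kr(F)$ has the form $\kappa_\alpha$, and $\kappa_\alpha\kappa_\beta=\kappa_{\alpha\beta}$. Injectivity of $\iota$ is equally formal, since $\Sel_4(F)$ is introduced as a \emph{sub}group of $F^\times/\Fts$ and $\iota$ is the inclusion; one has only to observe that the set cut out by the conditions ``odd norm'', ``generates the square of an ideal'', and ``$\equiv\xi^2\bmod 4$'' is closed under multiplication, so that $\Sel_4(F)$ really is a group. Thus the whole content is the identification $\ker\phi=\Sel_4(F)$.

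For this I would first reformulate triviality of $\kappa_\alpha$ by class field theory. Put $K=F(\sqrt\alpha\,)$, let $\frf_0$ be the finite part of $\cond(K/F)$ (which by the conductor--discriminant formula equals $\disc(K/F)$), and let $\psi\colon I_{\frf_0}\to\Gal(K/F)$ be the Artin map, which is surjective. Since $\kappa_\alpha(\fp)=+1$ exactly when $\fp$ splits, i.e.\ when $\psi(\fp)=1$, and since (following the convention that Kronecker characters are evaluated at totally positive integers) $\kappa_\alpha$ is trivial precisely when it takes the value $+1$ on every principal ideal admitting a totally positive generator, $\kappa_\alpha$ is trivial if and only if $\psi$ kills every ideal of $P^+_{\frf_0}=I_{\frf_0}\cap P^+$. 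Because $I_{\frf_0}/P^+_{\frf_0}\cong\Cl^+(F)$, this happens if and only if $\psi$ factors through $\Cl^+(F)$, that is, if and only if $K$ lies in the narrow Hilbert class field $H_F^+$ (the ``if'' direction is clear, since ideals in $P^+_{\frf_0}$ are already trivial in $\Cl^+(F)$). By maximality of $H_F^+$ among abelian extensions of $F$ unramified at all finite primes, and since $K/F$ is abelian, we obtain
\[ \kappa_\alpha \text{ trivial} \iff K=F(\sqrt\alpha\,)\subseteq H_F^+ \iff K/F \text{ is unramified at every finite prime of } F. \]

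It remains to match the last condition with membership of $\alpha\Fts$ in $\Sel_4(F)$, which is a purely local computation at $2$. At a prime $\fp\nmid2$, the extension $F(\sqrt\alpha\,)/F$ is unramified if and only if $v_\fp(\alpha)$ is even; so if $K/F$ is unramified at all finite primes, then $v_\fp(\alpha)$ is even for \emph{every} $\fp$, and after dividing $\alpha$ by a suitable square we may assume $(\alpha)=\fa^2$ with $\fa$ coprime to $2$, in particular $(\alpha,2)=(1)$. For $\fp\mid2$ the (now unit) element $\alpha$ generates an unramified quadratic extension of $F_\fp$ precisely when $\alpha\equiv\xi^2\bmod4\cO_\fp$ for some $\xi$; patching these local square roots by the Chinese Remainder Theorem yields a single $\xi\in\cO_F$ with $\alpha\equiv\xi^2\bmod4$, i.e.\ $\alpha$ is $2$-primary. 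The converse implications run the same way. Hence $K/F$ is unramified at all finite primes if and only if $\alpha\Fts\in\Sel_4(F)$, and combining this with the displayed equivalence gives $\ker\phi=\Sel_4(F)$. The dictionary between these ramification conditions and the defining properties of $\Sel_4(F)$ is already set up in \cite{LemSel}.

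I expect the main obstacle to be precisely the local analysis at the primes above $2$: proving that ``$F(\sqrt\alpha\,)/F$ unramified at $\fp\mid2$'' is equivalent to $\alpha$ being a square modulo $4\cO_\fp$ once $v_\fp(\alpha)$ has been normalised to $0$, and then checking that these normalisations together with the congruence modulo $4$ can be arranged simultaneously without destroying the relation $(\alpha)=\fa^2$. The class-field-theoretic half is routine once one fixes the convention that Kronecker characters are tested at totally positive integers, which is exactly what singles out the \emph{narrow} Hilbert class field as the relevant object.
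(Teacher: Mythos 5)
Your proof is correct and follows essentially the same route as the paper's: both reduce triviality of $\kappa_\alpha$ to the extension $F(\sqrt{\alpha}\,)/F$ being unramified at all finite primes (the paper by identifying the conductor of the Kronecker symbol with that of the extension, you by an explicit detour through the narrow Hilbert class field), and then translate that condition into the defining properties of $\Sel_4(F)$ by the local analysis at the primes above $2$. You merely supply more detail at both steps than the paper's very terse argument does.
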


Observe that if $\alpha \in \Sel_4(F)$ is nontrivial, then
$F(\sqrt{\alpha}\,)/F$ is a quadratic extension unramified at all the
finite primes; in particular, the class number of $F$ in the strict sense
is even in this case.

If the class number of $F$ is odd, then $\Sel_4(F) \simeq E_4/E^2$,
where $E_4$ is the group of units congruent to a square modulo $4$.
An explicit example of a unit in
$E_4 \setminus E^2$ is given by the fundamental unit $\eps = 35 + 6 \sqrt{34}$
of $\Q(\sqrt{34}\,)$; since $2 \eps = (6 + \sqrt{34}\,)^2$ we have
$F(\sqrt{\eps}\,) = F(\sqrt{2}\,)$, and this is an unramified quadratic
extension of $F$.

\begin{proof}
  We only have to show that $\ker \phi = \im \iota$. Assume that
  $\kappa \in \ker \phi$; then $\kappa = (\frac{\alpha}{\cdot})$ is the
  trivial Kronecker character. Since the conductor of the
  Kronecker symbol is the conductor of the quadratic extension
  $F(\sqrt{\alpha})/F$, this extension is unramified at all finite
  primes; but this implies that  $(\alpha)$ is a square of an ideal
  and that $\alpha = \beta \gamma^2$ for some primary $\beta$,
  i.e., that $\alpha \Fts \in \Sel_4(F)$.
\end{proof}

Next we show that Kronecker characters are Dirichlet characters:
\begin{prop}
  Each Kronecker character $\kappa_\alpha = (\frac{\alpha}{\cdot})$
  is a primitive quadratic Dirichlet character with conductor
  $\disc(K/F)$, where $K = F(\sqrt{\alpha}\,)$.
\end{prop}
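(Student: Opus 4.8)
The plan is to recognize $\kappa_\alpha$ as the Artin symbol of $K = F(\sqrt{\alpha}\,)$ restricted to principal ideals, and then to read off both assertions from Artin reciprocity together with the conductor--discriminant formula. Since replacing $\alpha$ by $\alpha$ times a square changes neither $\kappa_\alpha$ nor $K$, I may assume $\alpha$ is not a square (if it is, $K = F$, $\disc(K/F) = (1)$, and $\kappa_\alpha$ is trivial, so there is nothing to prove). By its very definition, for an integer $\beta$ coprime to $\disc(K/F)$ one has $\kappa_\alpha(\beta) = \prod_\fp \big(\frac{K/F}{\fp}\big)^{v_\fp(\beta)}$, a product of Artin symbols over the primes $\fp$ dividing $\beta$ (all of them unramified in $K$, since $\beta$ is prime to $\disc(K/F)$, so no value $0$ occurs), followed by the identification $\Gal(K/F) \cong \mu_2$; thus $\beta \mapsto \kappa_\alpha(\beta)$ factors as $\beta \mapsto (\beta) \mapsto \mathrm{Art}_{K/F}((\beta))$.

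First I would apply the conductor--discriminant formula. As $\Gal(K/F)$ carries only the trivial character and one nontrivial quadratic character, whose conductor equals the conductor $\frf(K/F)$ of the extension, the formula gives $\disc(K/F) = \frf_0$, the finite part of $\frf(K/F)$ (the infinite part of $\frf(K/F)$ records the real places of $F$ that ramify in $K$). Artin reciprocity now says that $\mathrm{Art}_{K/F}$ annihilates every principal ideal $(\beta)$ with $\beta$ totally positive and $\beta \equiv 1 \bmod \disc(K/F)$ --- total positivity taking care of the entire infinite part of $\frf(K/F)$, whichever real places it involves. Consequently, for totally positive $\beta$ prime to $\disc(K/F)$ the value $\kappa_\alpha(\beta)$ depends only on the class of $\beta$ in $(\cO_F/\disc(K/F))^\times$; since every such class contains a totally positive integer, this presents $\kappa_\alpha$ as a quadratic Dirichlet character modulo $\disc(K/F)$.

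For primitivity I would argue by contradiction: if $\kappa_\alpha$ were also defined modulo a proper divisor $\fm$ of $\disc(K/F)$, then $\mathrm{Art}_{K/F}$ would kill every principal ideal $(\beta)$ with $\beta \gg 0$ and $\beta \equiv 1 \bmod \fm$; but that subgroup is precisely the one whose fixed field under the Artin map is the ray class field modulo $\fm \cdot \prod_{v \mid \infty} v$, so this modulus would be a modulus of definition for $K/F$, whence $\frf_0 \mid \fm$, i.e.\ $\disc(K/F) \mid \fm$ --- impossible. Hence $\cond(\kappa_\alpha) = \disc(K/F)$.

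The step I expect to cost the most care is the bookkeeping at the archimedean places and with units. One must make sure that the statement ``$\kappa_\alpha$ is a Dirichlet character modulo the finite ideal $\disc(K/F)$'' is legitimate even when $K/F$ ramifies at real places and $F$ possesses units of mixed signature; this is exactly why the Kronecker character is only evaluated at totally positive integers (equivalently, why one may adjoin the infinite Kronecker symbols), and why ``primitive Dirichlet character of conductor $\disc(K/F)$'' has to be matched with ``the conductor of the abelian extension $K/F$ has finite part $\disc(K/F)$'' rather than with the full ray-class conductor. Once this dictionary between Dirichlet characters on totally positive elements and ray class characters is in place, both halves of the proposition follow from Artin reciprocity and the conductor--discriminant formula.
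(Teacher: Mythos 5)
Your proposal is correct and follows essentially the same route as the paper: both identify $\kappa_\alpha$ with the Artin symbol of $K/F$ on (totally positive) principal ideals, use that the finite part of the conductor of $K/F$ equals $\disc(K/F)$ (conductor--discriminant), and deduce primitivity from the minimality of the conductor as a defining modulus. Your version merely spells out the bookkeeping at the infinite places and the primitivity argument that the paper's terse proof leaves implicit.
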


\begin{proof}
  By Artin's reciprocity law, the splitting behavior of a prime ideal in
  a quadratic extension $K = F(\sqrt{\alpha}\,)$ only depends on the coset
  of $\fp$ in the corresponding ideal group, and the conductor of this
  group is the relative discriminant of $K/F$. In particular, for any
  $\beta \gg 0$ (i.e., $\beta$ is totally positive -- at this point we
  restrict our characters to totally positive integers instead of
  introducing infinite Kronecker characters) in $F^\times$ coprime
  to $2\alpha$ we have $(\frac{\alpha}{\beta}) = \chi(\beta)$ for
  some Dirichlet character with conductor $\disc(K/F)$. Since the
  conductor is the minimal defining modulus, $\chi$ is primitive.
\end{proof}

This result will allow us to show that certain Dirichlet characters
are not Kronecker characters: a necessary condition for the existence
of a Kronecker character with conductor $\frf$ is the existence of
a quadratic extension $K/F$ with relative discriminant
$\disc(K/F) = \frf$.

\begin{prop}
  If $F$ has odd class number in the strict sense, then Kronecker characters
  can be identified with separants of quadratic extensions $K/F$.
\end{prop}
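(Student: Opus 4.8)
The plan is to derive this from the exact sequence of the preceding Proposition by showing that its left-hand term $\Sel_4(F)$ is trivial under the hypothesis. Since the strict class number $h^+(F)$ is odd, so is the ordinary class number $h(F)$, and the identification $\Sel_4(F) \simeq E_4/E^2$ recalled above applies, where $E_4$ denotes the group of units of $\cO_F$ that are congruent to a square modulo $4$. I would then argue $E_4 = E^2$ as follows: if $\eta \in E_4$, then $F(\sqrt{\eta}\,)/F$ is unramified at $2$ (because $\eta$ is $2$-primary) and at every other finite prime (because $(\eta)=(1)$ is a square), hence $F(\sqrt{\eta}\,)$ is contained in the Hilbert class field of $F$; but that field has odd degree $h(F)$ over $F$ and so contains no quadratic subextension, forcing $\eta \in \Fts$. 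Thus $\Sel_4(F) = 1$, and the exact sequence of the previous Proposition degenerates to an isomorphism $\phi\colon \Ft/\Fts \to \Kr(F)$.

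It then remains to read off the identification with separants. Via $\phi^{-1}$ each nontrivial Kronecker character $\kappa$ corresponds to a unique square class $\alpha\Fts$, hence to a unique quadratic extension $K = F(\sqrt{\alpha}\,)/F$, and conversely; the trivial character corresponds to the trivial extension $F/F$. Moreover $\phi$ carries $\kappa_\alpha\kappa_\beta = \kappa_{\alpha\beta}$ to the composite $F(\sqrt{\alpha}\,) * F(\sqrt{\beta}\,) = F(\sqrt{\alpha\beta}\,)$, so the correspondence is a group isomorphism onto $\Sep(F)$. What still has to be verified is that the separant of $K/F$ --- the distinguished (integral, squarefree away from $2$, suitably normalized at the dyadic and infinite places) representative of the class $\alpha\Fts$ --- is well defined; this is where the strict hypothesis genuinely enters, the oddness of $h^+(F)$ being needed to produce the required generators and the uniqueness of the normalized representative reducing once more to $\Sel_4(F) = 1$.

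The step I expect to be the real obstacle is this last one, i.e.\ making the passage from a square class to a canonical separant. Existence of the normalized generator is the delicate point: an arbitrary generator of a principal ideal can be adjusted to a totally positive, $2$-primary one only when enough suitable units are available, which the oddness of the strict class number supplies, while uniqueness reduces to the triviality of $\Sel_4(F)$ already established. Once this bookkeeping is carried out, the bijection $\kappa \leftrightarrow \sigma(K)$ between Kronecker characters and separants of quadratic extensions is immediate and is compatible with multiplication and with conductors, the conductor of $\kappa$ being $\disc(K/F)$ by the preceding Proposition.
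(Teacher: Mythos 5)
Your overall route coincides with the paper's: show that the map $\alpha\Fts\mapsto\kappa_\alpha$ has trivial kernel under the hypothesis, and then use separants as canonical representatives of square classes to get the identification. There is, however, a genuine error in your key step, the proof that $E_4=E^2$. A quadratic extension $F(\sqrt{\eta}\,)/F$ that is unramified at all \emph{finite} primes need not lie in the Hilbert class field in the ordinary sense --- it may well be ramified at infinite places --- so it is only contained in the Hilbert class field in the \emph{strict} sense, whose degree is $h^+(F)$, not $h(F)$. As written, your argument (odd $h(F)$ forces $E_4=E^2$) proves too much: the paper's own example immediately preceding this proposition, $F=\Q(\sqrt{6}\,)$ with $h(F)=1$ and $-\eps=-(5+2\sqrt{6}\,)\in E_4\setminus E^2$, is a direct counterexample. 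The fix is one word --- invoke the strict Hilbert class field and the oddness of $h^+(F)$, which is exactly the hypothesis --- but the distinction is the entire point of the proposition, so the slip matters. The repaired argument is then equivalent to the paper's: the kernel consists of classes of units congruent to squares modulo $4$, and odd strict class number forces these to be squares (equivalently $\Sel_4(F)=1$, since $\#\Sel_4(F)=2^{\rho^+}$).

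For the second half, your description of the separant as a ``squarefree away from $2$, suitably normalized at the dyadic and infinite places'' representative does not match the paper's definition (the separant is a generator $\delta$ of $\disc(K/F)^h$ with $F(\sqrt{\delta}\,)=F(\sqrt{\alpha}\,)$, unique up to $E_F^2$; it is neither squarefree nor required to be $2$-primary or totally positive), and consequently the ``delicate normalization'' you anticipate is not where the paper puts the work. The paper simply notes that $\delta\Fts=\alpha\Fts$, so $\kappa_\delta=\kappa_\alpha$, giving surjectivity of $\Sep(F)\to\Kr(F)$, while injectivity is the trivial-kernel statement already established. Your bookkeeping, once attached to the correct definition of separant, amounts to the same thing.
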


It is not enough to assume that $F$ has odd class number in the usual sense.
In fact, consider the field $F = \Q(\sqrt{6})$ with fundamental unit
$\eps = 5 + 2\sqrt{6}$. The separant $-\eps E^2$ belongs to the quadratic
extension $K = F(\sqrt{-\eps}\,) = \Q(\sqrt{-2},\sqrt{-3}\,)$ since
$-\eps = (\sqrt{-2} + \sqrt{-3}\,)^2$, and $K/F$ is unramified at all
finite primes. This implies that principal ideals are in the kernel
of the Artin map, which in turn means that the Kronecker symbol
$(\frac{-\eps}{\cdot})$ is trivial on $F^\times$ although $\eps$ is
not a square.

\begin{proof}
  The natural homomorphism $\kappa: F^\times/F^{\times\,2} \lra \Kr(F)$
  defined by $\kappa(\alpha F^{\times\,2}) = (\frac{\alpha}{\cdot})$
  is clearly surjective. The kernel consists of all $\alpha F^{\times\,2}$
  such that  $\kappa(\alpha F^{\times\,2})$ has conductor $(1)$. This is
  only possible if $\alpha$ is (up to squares) a unit congruent to a
  square modulo $4$. Since $F$ has odd class number in the strict sense,
  the only such units are squares.

  The separant $\delta E^2$ corresponds to the Kronecker symbol
  $(\frac{\delta}{\cdot})$. The homomorphism $\lambda: \Sep(F) \lra \Kr(F)$
  with $\lambda(\delta E^2) = \kappa_\delta$ is injective by what  we have
  already shown. We claim that it is also surjective. In fact,
  let $\kappa_\alpha$ denote any Kronecker character, and let
  $\delta = \sep(F(\sqrt{\alpha}\,)/F)$. Then
  $\delta F^{\times\,2} = \alpha F^{\times\,2}$, and
  $\kappa_\alpha(\beta) = \kappa_\delta (\beta)$
  for all elements $\beta \in F^\times$ coprime to $\alpha\delta$.
\end{proof}

\section{Determination of the Separant Class Group}

The separant class group $\SCl(F) = \Dir(F)/\Kr(F)$ of a number field $F$
is by definition an elementary abelian $2$-group, and so it is sufficient
to determine its order, or its dimension as an $\FF_2$-vector space. This
is accomplished by our main theorem:

\begin{thm}\label{TM1}
  There is an isomorphism
  $$ \SCl(F) \simeq \Cl_F\{4\}/\Cl_F\{4\}^2, $$
  where $\Cl_F\{4\}$ is the ray class group of $F$ defined modulo $(4)$.
  In particular we have $\#\SCl(F) = 2^{\rho^+ + s}$, where
  $2^{\rho^+} = \# \Cl_2^+(K)/\Cl_2^+(K)^2$ denotes the $2$-part of the
  class group of $F$ in the strict sense, and $s$ the number of pairs
  of complex embeddings of $F$. Thus $F$ has trivial separant class group
  if and only if $F$ is totally real with odd class number in the strict
  sense.
\end{thm}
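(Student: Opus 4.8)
Both sides are elementary abelian $2$-groups, so it is enough to produce an isomorphism, and I would do so by a duality computation built on the tautological exact sequence
$$ 1 \lra \Kr(F) \lra \Dir(F) \lra \SCl(F) \lra 1, $$
available because every Kronecker character is a primitive quadratic Dirichlet character, the inclusion being strict exactly when the conductor of a Dirichlet character is not a relative discriminant. The first task is to make the outer groups explicit. Passing to the limit over all moduli, and using that every residue class modulo $\fm$ contains a totally positive representative, one identifies $\Dir(F)$ with $\Hom(\widehat{\cO}_F^{\,\times},\mu_2)$, where $\widehat{\cO}_F^{\,\times}=\prod_{\fp}\cO_{F,\fp}^{\times}$; its Pontryagin dual is $\widehat{\cO}_F^{\,\times}/(\widehat{\cO}_F^{\,\times})^{2}$. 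Under this identification $\Kr(F)$ is the image of the homomorphism $\psi\colon\Ft/\Fts\to\Dir(F)$ sending $\alpha\Fts$ to the Dirichlet character of $\kappa_\alpha$, and expanding $(\frac{\alpha}{\beta})$ into local symbols and using Hilbert reciprocity gives $\psi(\alpha)(u)=\prod_{\fp}(\alpha,u_{\fp})_{\fp}$, a finite product of local Hilbert symbols, with $u$ regarded as an id\`ele trivial at the archimedean places.

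Dualizing the exact sequence, $\SCl(F)\simeq\SCl(F)^{\vee}$ becomes the annihilator of $\Kr(F)=\im\psi$ in $\widehat{\cO}_F^{\,\times}/(\widehat{\cO}_F^{\,\times})^{2}$, i.e.\ the set of $u\in\widehat{\cO}_F^{\,\times}$ with $\prod_v(\alpha,u_v)_v=1$ for all $\alpha\in\Ft$. By Kummer theory (using $\mu_2\subset F$) and global class field theory, the quadratic characters of the id\`ele class group $C_F$ are identified with $\Ft/\Fts$, so the global Hilbert pairing $\Ft/\Fts\times C_F/C_F^{2}\to\mu_2$ is nondegenerate (the subgroup $C_F^{2}$ is closed, the norm-one id\`ele class group being compact). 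Hence the vanishing condition says that $u=\delta\cdot w^{2}$ in the id\`eles for some $\delta\in\Ft$; comparing components shows that $\delta$ is totally positive (at the archimedean places, where $u=1$) and that $(\delta)=\fa^{2}$ for some ideal $\fa$ (at the finite places, where $u$ is a unit), and then $u$ agrees, modulo $(\widehat{\cO}_F^{\,\times})^{2}$, with the unit part of $\delta$. Conversely every totally positive $\delta$ with $(\delta)=\fa^{2}$ yields an element of the annihilator, and the induced surjection $\{\delta\gg0:(\delta)=\fa^{2}\}\Fts/\Fts\to\SCl(F)^{\vee}$ is injective, since a $\delta$ which is a square in every completion is a global square (a nontrivial quadratic extension has an inert prime). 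Denoting this subgroup of $\Sel(F)$ by $\Sel^{+}(F)$, we get $\SCl(F)\simeq\Sel^{+}(F)$. On the other hand a short local computation of discriminants of quadratic extensions shows that $\disc(F(\sqrt{\delta})/F)\mid(4)$ precisely when $(\delta)$ is a square ideal, so together with splitting at all infinite places this identifies $\Sel^{+}(F)$ with the character group of the ray class group $\Cl_F\{4\}$; therefore $\SCl(F)\simeq\Cl_F\{4\}/\Cl_F\{4\}^{2}$.

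For the order it remains to compute $\dim_{\FF_2}\Sel^{+}(F)$. The sign homomorphism and the definition of the strict class group yield an exact sequence
$$ 1 \lra E^{+}/E^{2} \lra \Sel^{+}(F) \lra \pi\bigl(\Cl^{+}(F)[2]\bigr) \lra 1, $$
where $E$ is the unit group, $E^{+}$ the subgroup of totally positive units, and $\pi\colon\Cl^{+}(F)\to\Cl(F)$ the natural surjection. By Dirichlet's unit theorem, together with the fact that $\ker\pi$ is the quotient of $\{\pm1\}^{r_1}$ ($r_1$ the number of real places) by the group of sign patterns of units, one obtains $\dim(E^{+}/E^{2})=s+\dim\ker\pi$, while $\dim\pi(\Cl^{+}(F)[2])=\dim\Cl^{+}(F)[2]-\dim\ker\pi$; the two $\ker\pi$--contributions cancel, leaving $\dim_{\FF_2}\Sel^{+}(F)=s+\dim_{\FF_2}\Cl^{+}(F)[2]=\rho^{+}+s$. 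Thus $\#\SCl(F)=2^{\rho^{+}+s}$, and this is trivial precisely when $s=0$ and $\rho^{+}=0$, that is, when $F$ is totally real with odd class number in the strict sense.

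I expect the principal difficulty to be the last assertion of the second paragraph: pinning down the local behaviour at the primes above $2$ and at the real places so that the annihilator of $\Kr(F)$ is identified with the ray class group of modulus exactly $(4)$, rather than $(2)$ or $(4)\infty$. The remaining steps are routine work with the standard exact sequences of class field theory.
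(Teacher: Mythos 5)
Your argument is correct in outline, but it takes a genuinely different route from the paper. The paper never dualizes: it splits both $\Kr(F)$ and $\Dir(F)$ into a ``mod $4$'' part and an ``ideal'' part, assembles the $3\times 3$ commutative diagram of Figure~\ref{CDSCl} (top row $1\to\Kr_4(F)\to\Kr(F)\to P_F/P_F\cap I_F^2\to 1$, middle row $1\to\Dir_4(F)\to\Dir(F)\to I_F/I_F^2\to 1$), checks exactness of those two rows by hand, and gets the bottom row $1\to\SCl_4(F)\to\SCl(F)\to\Cl(F)/\Cl(F)^2\to 1$ from the snake lemma; the order $2^{\rho^++s}$ then comes from the previously computed orders of $\Sel(F)$ and $\Sel_4(F)$, and the isomorphism with $\Cl_F\{4\}/\Cl_F\{4\}^2$ is read off by comparing orders of elementary abelian $2$-groups. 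You instead dualize the defining sequence once and for all, identify $\Dir(F)^\vee$ with $\widehat{\cO}_F^{\,\times}/(\widehat{\cO}_F^{\,\times})^2$, and compute the annihilator of $\Kr(F)$ via the nondegeneracy of the global Hilbert pairing, landing on the ``totally positive Selmer group'' $\Sel^+(F)=\{\delta\gg 0:(\delta)=\fa^2\}\Fts/\Fts$, which you then match with $\Hom(\Cl_F\{4\},\mu_2)$. Your route is more uniform (no case split between the mod-$4$ and odd parts) and produces the isomorphism with the ray class group more canonically, at the cost of invoking full idelic class field theory and the local conductor computation at $\fl\mid 2$ that you rightly flag; that computation does go through ($\disc$ of $F_\fl(\sqrt{u}\,)$ divides $(4)$ for units $u$, and fails to divide $(4)$ when the valuation of $\delta$ at $\fl$ is odd), and your order count via $1\to E^+/E^2\to\Sel^+(F)\to\pi(\Cl^+(F)[2])\to 1$ is correct. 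The paper's more elementary route has the side benefit of directly producing the exact sequence $1\to\SCl_4(F)\to\SCl(F)\to\Cl(F)/\Cl(F)^2\to 1$, which is reused later in the proof of Theorem~\ref{TM2}. Two small points of hygiene: you should write $(\Dir(F)/\Kr(F))^\vee=\Kr(F)^\perp$ first and only deduce $\SCl(F)\simeq\SCl(F)^\vee$ after the annihilator is seen to be finite, and you should note explicitly that injectivity of $\Kr(F)\to\Dir(F)$ holds by the paper's convention of identifying Kronecker characters that agree on common domains.
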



We will prove this theorem in two steps: First we discuss the case of
characters defined modulo $4$, and then the general case.

\subsection*{Dirichlet Characters with conductor dividing $(4)$}

Our proof uses a few facts concerning Kronecker and Dirichlet characters
whose conductors divide $(4)$. Let $\Kr_4(F)$ denote the group of
Kronecker characters  with conductor dividing $(4)$, $\Dir_4(F)$
the group of Dirichlet characters defined modulo $(4)$, and
$M_4 = (\cO_F/4\cO_F)^\times$ the group of coprime residue classes modulo $4$.
For the proof of Thm.~\ref{TM1} we only need the fact that the homomorphism
$\iota_2: M_4/M_4^2 \lra \Dir_4(F)$ is surjective.

\begin{prop}\label{PD4}
  There is an isomorphism $\iota_2: M_4/M_4^2 \lra \Dir_4(F)$. Both groups
  have order $2^n$, where $n = (F:\Q)$.
\end{prop}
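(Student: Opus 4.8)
The plan is to prove Proposition \ref{PD4} by exhibiting a natural surjection $\iota_2\colon M_4/M_4^2 \lra \Dir_4(F)$, showing it is injective by a counting argument, and then computing the common order of both groups. First I would define $\iota_2$: a quadratic Dirichlet character defined modulo $(4)$ is by definition a homomorphism $(\cO_F/4\cO_F)^\times \to \mu_2$, i.e., an element of $\Hom(M_4, \mu_2)$. Any homomorphism to $\mu_2$ kills squares, so it factors through $M_4/M_4^2$; conversely, by the usual duality for finite abelian groups, $\Hom(M_4/M_4^2, \mu_2) \simeq M_4/M_4^2$ (non-canonically, but canonically as far as orders go). The map $\iota_2$ sends the class of $a \in M_4$ to the character $\chi_a$ defined by $\chi_a(b) = $ the value of the Hilbert-type pairing, or more elementarily one uses that $M_4/M_4^2$ is an $\FF_2$-vector space and its dual has the same dimension. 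I would phrase $\iota_2$ as the composite of the quotient $M_4 \to M_4/M_4^2$ with the identification of $M_4/M_4^2$ with its own dual $\Dir_4(F) = \Hom(M_4/M_4^2, \mu_2)$, noting that surjectivity is immediate since every quadratic character modulo $(4)$ visibly factors through $M_4/M_4^2$.

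Next I would establish the order computation, which is the substantive part. I need $\#(M_4/M_4^2) = 2^n$ with $n = (F:\Q)$; since $M_4/M_4^2$ is an $\FF_2$-vector space, equivalently $\dim_{\FF_2} M_4/M_4^2 = n$. The structure of $M_4 = (\cO_F/4\cO_F)^\times$ is governed by the factorization $(2) = \prod \fp_i^{e_i}$ in $\cO_F$, with residue degrees $f_i$, so that $\sum e_i f_i = n$. By the Chinese Remainder Theorem, $\cO_F/4\cO_F \simeq \prod_i \cO_F/\fp_i^{2e_i}$, hence $M_4 \simeq \prod_i (\cO_F/\fp_i^{2e_i})^\times$. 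For each local factor, $(\cO_F/\fp^{2e})^\times$ has order $(q-1)q^{2e-1}$ where $q = 2^f$ is the residue field size, and its structure is $(\cO_F/\fp^{2e})^\times \simeq C_{q-1} \times (\text{a group of order } q^{2e-1} = 2^{f(2e-1)})$, the latter being the principal units $1 + \fp$ modulo $1 + \fp^{2e}$, which is a $2$-group. Now $C_{q-1}$ has odd order, so it contributes nothing to $M_4/M_4^2$; the $\FF_2$-dimension of the factor is therefore the $\FF_2$-dimension of $(1+\fp)/(1+\fp^{2e}) \cdot (\text{squares})$. The key local fact is that for the group of principal units $U^{(1)}/U^{(2e)}$ of a local ring with residue characteristic $2$, one has $\dim_{\FF_2}\bigl(U^{(1)}/U^{(2e)}\bigr)/\bigl(U^{(1)}/U^{(2e)}\bigr)^2 = ef$ — because squaring shifts the filtration by $v_\fp(2) = e$ levels (up to finite-index corrections coming from wild ramification, but the dimension count comes out to $ef$ precisely because we truncate at level $2e = 2 v_\fp(2)$). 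Summing over $i$ gives $\sum_i e_i f_i = n$, as required.

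The hard part will be pinning down the local squaring-map computation cleanly. One clean way: the squaring map $x \mapsto x^2$ on $U^{(1)}$ satisfies $(1 + \pi^k u)^2 = 1 + \pi^{2k}u^2 + 2\pi^k u$; for $k < e = v_\fp(2)$ the dominant term is $\pi^{2k}$, for $k > e$ it is $2\pi^k$ (valuation $e+k$), and at $k = e$ the two terms have equal valuation. So modulo level $2e$, the image of squaring is exactly the subgroup generated by levels $2, 4, \ldots$ together with the contribution of $2\cdot U^{(1)}$; a short filtration-by-filtration bookkeeping argument shows that $U^{(1)}/(U^{(1)})^2 U^{(2e)}$ has $\FF_2$-dimension equal to the number of ``odd'' levels $1, 3, \ldots, 2e-1$, each contributing $f$, for a total of $ef$. (Equivalently, one may invoke the well-known formula $\#\bigl(F_\fp^\times / F_\fp^{\times 2}\bigr) = 2^{e f} \cdot 2 \cdot \#\mu_2 = 2^{ef+2}$ for a $2$-adic local field and track which part of it lives in $M_4$ — but the direct truncated computation is more self-contained.) Once the local dimension $e_i f_i$ is established, summation and CRT finish the proof, giving both $\#(M_4/M_4^2) = 2^n$ and, via duality, $\#\Dir_4(F) = 2^n$, so that $\iota_2$ is a surjection between finite groups of equal order and hence an isomorphism.
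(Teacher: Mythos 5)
Your proof is correct, but it takes a genuinely different route from the paper's in both halves. For the isomorphism itself, the paper also begins with the duality $\Dir_4(F) = \Hom(M_4/M_4^2, \mu_2)$, but it does not stop at the resulting basis-dependent identification: it constructs a specific canonical map by writing the Kronecker character $\kappa_\alpha$ of a representative $\alpha$ as a product $\chi_0\chi_1$ of a primitive quadratic character $\chi_0$ defined modulo $4$ and a character $\chi_1$ of odd conductor, and setting $\iota_2(\alpha M_4^2) = \chi_0$; well-definedness and injectivity are then checked via conductors. That canonical $\iota_2$ is exactly what is needed later for the commutativity of the fundamental diagram, so your non-canonical dual-space identification proves the proposition as literally stated but would have to be replaced by (or shown to agree with) the canonical map before being used in the proof of Theorem \ref{TM1}. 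For the order, the paper simply quotes $M_4/M_4^2 \simeq (\Z/2\Z)^n$ from \cite{LemSel}, whereas you give a self-contained local computation: CRT, splitting off the odd part $C_{q-1}$, and a filtration argument showing that $(U^{(1)}/U^{(2e)})$ modulo squares has $\FF_2$-dimension $ef$. That computation is right --- the image of squaring fills exactly the even levels $2, 4, \dots, 2e-2$, each of size $q = 2^f$, so the quotient has order $q^{2e-1}/q^{e-1} = q^e = 2^{ef}$, and the delicate case $k = e$, where the two terms of $(1+\pi^e u)^2$ have equal valuation, only contributes at level $2e$ and hence dies under the truncation --- and summing $e_i f_i$ over the primes above $2$ gives $n$. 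Your route is longer but more self-contained; the paper's is shorter but leans on an external reference for the order and on a canonical construction that your argument does not supply.
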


\begin{proof}
  Quadratic Dirichlet characters defined modulo $4$ are homomorphisms
  from $M_4$ to the multiplicatively written group $\Z/2\Z$. Since such
  homomorphisms are trivial on squares, they induce homomorphisms
  $M_4/M_4^2 \lra \Z/2\Z$. Thus $\Dir_4(F) = \Hom(M_4/M_4^2, \Z/2\Z)$,
  which means that $\Dir_4(F)$ is the dual of $M_4/M_4^2$ (for example
  as $\Z/2\Z$-vector spaces); in particular, $\Dir_4(F) \simeq M_4/M_4^2$
  as abelian groups.

  The isomorphism between $M_4/M_4^2$ and $\Dir_4(F)$ given above is not
  canonical, but depends on the choice of a basis. In this case, however,
  there is also a canonical isomorphism that we will now construct.
  
  Let $\alpha$ represent a coprime residue class modulo $4$, and let
  $\kappa_\alpha$ denote the corresponding Kronecker character.
  Write $\kappa_\alpha = \chi_0 \chi_1$ as the product of a primitive
  quadratic Dirichlet character $\chi_0$ defined modulo $4$ and
  a Dirichlet character $\chi_1$ defined modulo an odd ideal. Then set
  $\iota_2(\alpha M_4^2) = \chi_0$. 

  The map $\iota_2$ is well defined: If $\alpha \equiv \beta \bmod 4$,
  $\kappa_\alpha = \chi_0 \chi_1$ and $\kappa_\beta = \chi_0' \chi_1'$,
  then $\kappa_\alpha \kappa_\beta = \kappa_{\alpha\beta}$ is defined modulo
  an odd ideal since $\alpha/\beta \equiv 1 \bmod 4$. Thus $\chi_0 = \chi_0'$.


  We claim that $\iota_2$ is injective. Assume that
  $\iota_2(\alpha M_4^2) = \iota_2(\beta M_4^2)$. Then $\kappa_{\alpha\beta}$
  has conductor coprime to $2$, hence $\alpha\beta \equiv \xi^2 \bmod 4$,
  which implies that $\alpha$ and $\beta$ generate the same class in
  $M_4/M_4^2$.

  Since $M_4/M_4^2 \simeq (\Z/2\Z)^n$ for $n = (F:\Q)$ (see
  \cite[p. 281]{LemSel}; sending $\alpha \bmod 2$ to
  $1+2\alpha \bmod 4$) induces an isomorphism
  $\cO/2\cO \simeq (\Z/2\Z)^n \lra M_4/M_4^2$.
\end{proof}


\subsection*{Kronecker characters with conductor dividing $(4)$}

Next we determine the number of Kronecker characters with conductor
dividing $(4)$. 

\begin{prop}
  We have an exact sequence 
  $$ \begin{CD}
    1 @>>>  \Sel_4(F) @>{\iota}>>  \Sel(F)  @>{\nu}>>  \Kr_4(F) @>>>  1
  \end{CD} $$
\end{prop}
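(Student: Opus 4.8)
The plan is to realize $\nu$ as the restriction to $\Sel(F)$ of the homomorphism $\phi\colon \Ft/\Fts\to\Kr(F)$, $\phi(\alpha\Fts)=\kappa_\alpha$, from the Proposition above that computes $\ker\phi$; here $\iota$ is simply the inclusion $\Sel_4(F)\subseteq\Sel(F)$, valid because every element of $\Sel_4(F)$ generates a square ideal and hence lies in $\Sel(F)$. Granting this, exactness at $\Sel_4(F)$ is trivial, and exactness at $\Sel(F)$ comes for free: since that earlier Proposition gives $\ker\phi=\Sel_4(F)$ inside $\Ft/\Fts$, we get $\ker\nu=\Sel(F)\cap\Sel_4(F)=\Sel_4(F)$. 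So the whole statement reduces to computing the image, i.e. to showing $\nu(\Sel(F))=\Kr_4(F)$. Because $\Kr(F)$ is generated by the $\kappa_\alpha$ and $\kappa_\alpha\kappa_\beta=\kappa_{\alpha\beta}$, every element of $\Kr_4(F)$ has the form $\kappa_\alpha$, and by the Proposition above its conductor equals $\disc(F(\sqrt\alpha\,)/F)$. Hence $\nu(\Sel(F))=\Kr_4(F)$ is equivalent to the key assertion: for $\alpha\in\Ft$ one has $\disc(F(\sqrt\alpha\,)/F)\mid(4)$ if and only if $\alpha\Fts\in\Sel(F)$, i.e. $\alpha$ generates the square of a fractional ideal up to squares of elements.

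For the ``if'' direction I would argue locally. Suppose $(\alpha)=\fa^2$. At a prime $\fp\nmid 2$ the valuation $v_\fp(\alpha)$ is even, so $F_\fp(\sqrt\alpha\,)/F_\fp$ is unramified and contributes nothing to the relative discriminant. At a prime $\fp\mid 2$ write $\alpha=u\pi^{2k}$ locally with $u$ a unit and $\pi$ a uniformizer, so that $F_\fp(\sqrt\alpha\,)=F_\fp(\sqrt u\,)$; then $\cO_\fp[\sqrt u\,]$ is an order whose discriminant is generated by $\disc(x^2-u)=4u$, that is by $\fp^{v_\fp(4)}$ since $u$ is a unit, and as $\cO_\fp[\sqrt u\,]$ lies in the maximal order the local discriminant exponent is at most $v_\fp(4)$. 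Multiplying over all $\fp$ gives $\disc(F(\sqrt\alpha\,)/F)\mid(4)$.

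For the ``only if'' direction, suppose $\disc(F(\sqrt\alpha\,)/F)\mid(4)$; then $F(\sqrt\alpha\,)/F$ is unramified outside $2$, so $v_\fp(\alpha)$ is even for all $\fp\nmid 2$. Assume for contradiction that $v_\fp(\alpha)$ is odd for some $\fp\mid 2$. Locally $\alpha=u\pi^{2k+1}$ with $u$ a unit, so $F_\fp(\sqrt\alpha\,)=F_\fp(\sqrt{u\pi}\,)$; put $\varpi:=\sqrt{u\pi}$, so $\varpi^2=u\pi$ and $\varpi$ has valuation $1$ in the (necessarily totally ramified) quadratic extension. Hence $\varpi$ is a uniformizer, $\{1,\varpi\}$ is an integral basis, the different is generated by $2\varpi$, which has valuation $2v_\fp(2)+1$, and since the residue extension is trivial the local discriminant exponent equals $2v_\fp(2)+1=v_\fp(4)+1>v_\fp(4)$ --- contradicting $\disc(F(\sqrt\alpha\,)/F)\mid(4)$. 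Therefore $v_\fp(\alpha)$ is even at every prime, $(\alpha)$ is a square ideal, and $\alpha\Fts\in\Sel(F)$. This establishes the key assertion, hence the surjectivity of $\nu$ onto $\Kr_4(F)$, which together with the kernel computation of the first paragraph yields the exact sequence.

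I expect the main obstacle to be exactly this local analysis at the primes dividing $2$: checking that $(4)$ --- not the radical of $(4)$, nor $(4)$ times the radical of $(2)$ --- is the precise cutoff. It rests on the two complementary estimates above: on the upper-bound side, that for a unit $u$ the order $\cO_\fp[\sqrt u\,]$ has discriminant of the even valuation $v_\fp(4)$; and on the exclusion side, that adjoining the square root of a uniformizer forces a different of odd valuation $v_\fp(4)+1$, strictly larger than $v_\fp(4)$. Everything else is formal once one notes that $\ker\nu=\Sel_4(F)$ is inherited from the earlier Proposition and that all Kronecker characters, by construction, have the shape $\kappa_\alpha$.
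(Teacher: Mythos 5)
Your proof is correct and follows essentially the same route as the paper: $\iota$ is the inclusion, $\ker\nu$ is inherited from the earlier computation of $\ker\phi$, and surjectivity reduces to the equivalence between $\disc(F(\sqrt{\alpha}\,)/F)\mid(4)$ and $(\alpha)$ being the square of an ideal. The paper merely asserts that last equivalence, whereas you verify it by local discriminant and different computations at the primes above $2$; that added detail is sound.
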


\begin{proof}
The homomorphism $\iota$ is an injection since $\Sel_4(F) \subseteq \Sel(F)$.
We set $\nu(\alpha F^{\times\,2}) = \kappa_{\alpha}$. Then $\ker \nu$
consists of the cosets $\alpha F^{\times\,2}$ such that $\kappa_\alpha = 1$;
this holds if and only if $\alpha \equiv \xi^2 \bmod 4$, i.e., if
$\alpha F^{\times\,2} \in \Sel_4(F)$.

Finally assume that $\kappa_\alpha \in \Kr_4(F)$. Then $(\alpha) = \fa^2$,
hence $\kappa_\alpha \in \im \nu$.
\end{proof}

We know from \cite{LemSel} that $\# \Sel_4(F) = 2^{\rho^+}$,
where $\rho^+$ denotes the $2$-rank of $\Cl_2^+(F)$, and that
$\# \Sel(F) = 2^{\rho + r + s}$, where $\rho$ denotes the $2$-rank of $\Cl_2(F)$,
and $r$ and $2s$ denote the number of real and complex embeddings of $F$.

\begin{cor}
  We have $\# \Kr_4(F) = 2^{r+s-(\rho^+-\rho)}$.
\end{cor}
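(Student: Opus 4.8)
The plan is to compute $\#\Kr_4(F)$ directly from the short exact sequence of the preceding proposition together with the two orders recalled from \cite{LemSel}. Since $\Sel_4(F)$, $\Sel(F)$ and $\Kr_4(F)$ are all finite (indeed elementary abelian $2$-)groups, exactness of
$$ 1 \lra \Sel_4(F) \stackrel{\iota}{\lra} \Sel(F) \stackrel{\nu}{\lra} \Kr_4(F) \lra 1 $$
gives the multiplicativity of orders $\#\Sel(F) = \#\Sel_4(F) \cdot \#\Kr_4(F)$, so that $\#\Kr_4(F) = \#\Sel(F)/\#\Sel_4(F)$.

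Next I would substitute the input data. By \cite{LemSel} we have $\#\Sel_4(F) = 2^{\rho^+}$, where $\rho^+$ is the $2$-rank of $\Cl_2^+(F)$, and $\#\Sel(F) = 2^{\rho + r + s}$, where $\rho$ is the $2$-rank of $\Cl_2(F)$ and $r$, $2s$ count the real and complex embeddings of $F$. Division yields
$$ \#\Kr_4(F) = 2^{\rho + r + s - \rho^+} = 2^{r + s - (\rho^+ - \rho)}, $$
which is exactly the asserted formula. For the exponent to be meaningfully written in this form one should note that $\rho^+ \ge \rho$: the narrow class group $\Cl^+(F)$ surjects onto $\Cl(F)$, and a surjection of finite abelian groups cannot increase the $2$-rank, so $\rho^+ - \rho \ge 0$.

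There is no real obstacle here. All the substance sits in the exact sequence of the previous proposition — whose proof identifies $\ker\nu$ with $\Sel_4(F)$ via the congruence $\alpha \equiv \xi^2 \bmod 4$, and whose surjectivity of $\nu$ is precisely the statement that a Kronecker character of conductor dividing $(4)$ is induced by an element generating the square of an ideal — together with the cardinalities imported from \cite{LemSel}. Once those are in hand the corollary is a one-line bookkeeping step; the only care needed is to keep the strict-class-group $2$-rank $\rho^+$ (appearing in $\#\Sel_4(F)$) distinct from the ordinary $2$-rank $\rho$ (appearing inside $\#\Sel(F)$), so that the two exponents are not conflated.
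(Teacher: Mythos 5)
Your proof is correct and follows exactly the route the paper intends: the corollary is obtained by taking orders in the exact sequence $1 \to \Sel_4(F) \to \Sel(F) \to \Kr_4(F) \to 1$ and substituting $\#\Sel_4(F) = 2^{\rho^+}$ and $\#\Sel(F) = 2^{\rho+r+s}$ from \cite{LemSel}. The added remark that $\rho^+ \ge \rho$ is a nice touch but not needed for the computation itself.
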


\subsection*{The separant class group modulo $4$}

Define the separant class group modulo $4$ as the quotient
$$ \SCl_4(F) = \Dir_4(F) / \Kr_4(F). $$
Then
$$ \# \SCl_4(F) = 2^{n-r-s-\rho+\rho^+} = 2^{s+\rho^+ - \rho}. $$
This implies

\begin{cor}\label{CK4}
  Every Dirichlet character defined modulo $4$ is represented by
  a Kronecker character if and only if $F$ is totally real and
  $\Cl_2(F)$ and $\Cl_2^+(F)$ have the same rank.
\end{cor}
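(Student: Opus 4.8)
The plan is to read the corollary directly off the order formula $\#\SCl_4(F) = 2^{s+\rho^+-\rho}$ just established, together with the observation that the exponent $s + (\rho^+ - \rho)$ is a sum of two nonnegative integers. First I would record that $\Kr_4(F)$ really is a subgroup of $\Dir_4(F)$: by the proposition identifying each Kronecker character $\kappa_\alpha$ with a primitive quadratic Dirichlet character of conductor $\disc(F(\sqrt{\alpha}\,)/F)$, a Kronecker character of conductor dividing $(4)$ is in particular a Dirichlet character defined modulo $(4)$. Hence the assertion ``every Dirichlet character defined modulo $4$ is represented by a Kronecker character'' is literally the statement $\Kr_4(F) = \Dir_4(F)$, equivalently $\SCl_4(F) = 1$, equivalently $s + \rho^+ - \rho = 0$.

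Next I would check that each of the two summands $s$ and $\rho^+ - \rho$ is $\ge 0$. Trivially $s \ge 0$, being the number of pairs of complex embeddings. For the second, recall the canonical surjection $\Cl^+(F) \twoheadrightarrow \Cl(F)$ whose kernel, generated by the classes of the archimedean places, is an elementary abelian $2$-group; passing to $2$-primary parts gives a surjection $\Cl_2^+(F) \twoheadrightarrow \Cl_2(F)$, and a surjection of finite abelian $2$-groups cannot raise the $2$-rank, so $\rho = \rk \Cl_2(F) \le \rk \Cl_2^+(F) = \rho^+$. A sum of two nonnegative integers vanishes iff both vanish, so $\SCl_4(F)$ is trivial iff $s = 0$ \emph{and} $\rho^+ = \rho$, i.e., iff $F$ is totally real and $\Cl_2(F)$ and $\Cl_2^+(F)$ have the same rank. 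This is exactly the claimed equivalence.

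I expect no serious obstacle here: once the order formula for $\SCl_4(F)$ is in hand, everything is immediate. The only point deserving a line of justification is the inequality $\rho^+ \ge \rho$; one should make sure to argue via the surjection $\Cl_2^+(F) \twoheadrightarrow \Cl_2(F)$ (the restriction to $2$-primary components of a surjection of finite abelian groups) and the fact that quotients do not increase the $2$-rank, rather than mistakenly reasoning with subgroups.
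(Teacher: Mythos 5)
Your argument is exactly the paper's: read the corollary off the order formula $\#\SCl_4(F) = 2^{s+\rho^+-\rho}$ and observe that $s \ge 0$ and $\rho^+ \ge \rho$, so the exponent vanishes iff both summands do. The only addition is your explicit justification of $\rho^+ \ge \rho$ via the surjection $\Cl_2^+(F) \twoheadrightarrow \Cl_2(F)$, which the paper leaves implicit; this is correct and harmless.
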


In fact we have $s \ge 0$ and $\rho^+ \ge \rho$, hence
$2^{s+\rho^+ - \rho} = 1$ if and only if $s = 0$ and $\rho^+ = \rho$.

\begin{cor}
  If $F$ has odd class number, then every Kronecker character with
  conductor dividing $4$ has the form $\kappa_\eps$ for a unit $\eps \in E_F$.
\end{cor}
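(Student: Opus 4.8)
The plan is to combine the previous proposition (Kronecker characters with conductor dividing $(4)$ can be identified with separants of quadratic extensions $K/F$, valid when $F$ has odd class number in the strict sense) with the structural description of $\Sel(F)$ and $\Sel_4(F)$ when the class number is odd. First I would observe that if $F$ has odd class number, then $\Cl_2(F)$ is trivial, so $\rho = 0$, and consequently $\Sel(F) = \{\alpha \in F^\times : (\alpha) = \fa^2\}/F^{\times\,2}$ consists precisely of classes represented by units, since an ideal $\fa$ with $\fa^2$ principal must itself be principal (as $\Cl_2(F) = 1$), forcing $\alpha$ to be a unit times a square. Thus the surjection $\nu : \Sel(F) \lra \Kr_4(F)$ from the preceding proposition has every element of $\Sel(F)$ represented by a unit, and therefore every Kronecker character in $\Kr_4(F)$ is of the form $\nu(\eps F^{\times\,2}) = \kappa_\eps$ for some unit $\eps \in E_F$.

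The key steps, in order, are: (1) deduce $\Cl_2(F) = 1$ from oddness of the class number, hence $\rho = 0$; (2) use this to show $\Sel(F) = E_F F^{\times\,2}/F^{\times\,2}$, i.e., every Selmer element is represented by a unit — this is the heart of the argument and follows because $(\alpha) = \fa^2$ with $\fa$ principal gives $\fa = (\gamma)$, so $\alpha/\gamma^2$ is a unit; (3) invoke the exact sequence $1 \to \Sel_4(F) \to \Sel(F) \xrightarrow{\nu} \Kr_4(F) \to 1$ to conclude that $\kappa_\eps$ for $\eps$ ranging over $E_F$ already exhausts $\Kr_4(F)$. One should note that the statement is about the \emph{ordinary} class number being odd, which is weaker than the strict class number being odd; but the relevant input is only $\rho = 0$ (ordinary $2$-rank), which is all that step (2) uses, so the hypothesis is exactly what is needed and no more.

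The main obstacle — really the only subtle point — is making sure that step (2) is watertight: one must check that $\Sel(F)$ really is generated by units modulo squares, which requires that the ideal $\fa$ with $\fa^2 = (\alpha)$ be principal, and this uses $\Cl_2(F) = 1$ rather than merely $\Cl(F)$ having trivial $2$-part in some weaker sense — but odd class number gives exactly $\Cl_2(F) = 1$. After that, the corollary is immediate: it is simply the assertion that the composite $E_F \to E_F/E_F^2 \hookrightarrow \Sel(F) \xrightarrow{\nu} \Kr_4(F)$ is surjective, which is what the chain of identifications above establishes. No delicate estimates or reciprocity arguments are needed here, since all the hard work has been done in the preceding propositions; this corollary is a direct specialization.
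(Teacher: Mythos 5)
Your proposal is correct and follows essentially the same route as the paper: the paper simply cites \cite[Prop.~3.1]{LemSel} for the identifications $\Sel(F) \simeq E/E^2$ and $\Sel_4(F) \simeq E_4/E^2$ under the odd class number hypothesis and then reads off the claim from the exact sequence $1 \to \Sel_4(F) \to \Sel(F) \to \Kr_4(F) \to 1$, whereas you prove the key identification directly (via $[\fa]^2 = 1 \Rightarrow [\fa] = 1$ in a group of odd order, so $\alpha = \eps\gamma^2$). Your remark that only the ordinary class number needs to be odd is consistent with the paper's statement; the opening appeal to the separant proposition is unnecessary but harmless, since your actual argument rests only on the exact sequence.
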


\begin{proof}
  If $F$ has odd class number, then $\Sel(F) \simeq E/E^2$ and
  $\Sel_4(F) \simeq E_4/E^2$ by \cite[Prop. 3.1]{LemSel}. Thus we get
  the exact sequence
  $$ \begin{CD}
    1 @>>>  E_4/E^2 @>{\iota}>>  E/E^2  @>{\nu}>>  \Kr_4(F) @>>>  1,
  \end{CD} $$
  which implies the claim.
\end{proof}

In Table \ref{Tab1}, the column $\Kr_4(F)$ lists the elements $\alpha$
for which $\kappa_\alpha$ generates $\Kr_4(F)$.

\begin{table}[ht!]
$$ \begin{array}{l|ccc|cc}
  \rsp  F & r+s & \rho & \rho^+ & \Kr_4(F) & \SCl_4(F) \\ \hline
  \Q    & 1 & 0 & 0           &  -1               & 1 \\
  \Q(i) & 1 & 0 & 0           &   i               & \la \chi_{2i} \ra  \\
  \Q(\sqrt{-5}\,) & 1 & 1 & 1 &  -1               & \la \chi_2 \ra  \\
  \Q(\sqrt{2}\,)  & 2 & 0 & 0 & -1,\ 1+\sqrt{2}   & 1 \\
  \Q(\sqrt{3}\,)  & 2 & 0 & 1 & 2+\sqrt{3}        & \la \chi_2 \ra  \\
  \Q(\sqrt{6}\,)  & 2 & 0 & 1 &  -1               & \la \chi_4  \ra \\
  \Q(\sqrt{10}\,) & 2 & 1 & 1 & -1,\ 3+\sqrt{10}  &   1 \\
  \Q(\sqrt{15}\,) & 2 & 1 & 2 & 4 + \sqrt{15}     & \la \chi_2 \ra  \\
  \Q(\sqrt{34}\,) & 2 & 1 & 1 & -1, 5 + \sqrt{34} &  1  \\  
  \end{array} $$
  \caption{Separant class groups modulo $4$.
    Here $\dim \Kr_4(F) = r+s-(\rho^+-\rho)$ and
    $\dim \SCl_4(F) = s + \rho^+ - \rho$.}\label{Tab1}
\end{table}

We next determine the conductor of the Kronecker character
$\kappa_{-1} = (\frac{-1}{\cdot})$:

\begin{prop}
  The Kronecker character $\kappa_2 = (\frac{-1}{\cdot})$ in
  $F = \Q(\sqrt{m}\,)$ is a Dirichlet character with conductor $\frf$
  given by
  $$ \frf = \begin{cases}
    4 & \text{ if } m \equiv 1 \bmod 4, \\
    2 & \text{ if } m \equiv 2 \bmod 4, \\
    1 & \text{ if } m = 3 \bmod 4.
    \end{cases} $$
\end{prop}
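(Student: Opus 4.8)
The plan is to translate the conductor of $\kappa_{-1}$ into a relative discriminant and then evaluate that discriminant by a conductor--discriminant computation. By the Proposition asserting that every Kronecker character $\kappa_\alpha = (\frac{\alpha}{\cdot})$ is a primitive quadratic Dirichlet character of conductor $\disc(F(\sqrt{\alpha}\,)/F)$, the conductor $\frf$ of $\kappa_{-1}$ equals the relative discriminant $\disc(K/F)$, where $K = F(\sqrt{-1}\,)$. We may assume $m$ squarefree and $m \neq -1$ (for $m = -1$ one has $K = F$ and the assertion is trivial), so that $K = \Q(\sqrt{m}, \sqrt{-1}\,)$ is a biquadratic field; since $\sqrt{-m} = \sqrt{-1}\cdot\sqrt{m} \in K$, its three quadratic subfields are $\Q(\sqrt{m}\,)$, $\Q(\sqrt{-1}\,)$ and $\Q(\sqrt{-m}\,)$.

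Write $d_a = \disc \Q(\sqrt{a}\,)$, so that $|d_{-1}| = 4$ and $d_a \in \{a, 4a\}$ according as $a \equiv 1 \bmod 4$ or not. I would then combine two standard facts: the conductor--discriminant formula applied to $\Gal(K/\Q) \simeq (\Z/2\Z)^2$, which gives $|\disc(K/\Q)| = |d_m| \cdot 4 \cdot |d_{-m}|$, and the tower (transitivity) formula for discriminants, $|\disc(K/\Q)| = N_{F/\Q}(\disc(K/F)) \cdot |\disc(F/\Q)|^2$, whose first factor is the absolute norm of the relative discriminant ideal. Dividing the two identities yields
$$ N_{F/\Q}(\disc(K/F)) = 4\,|d_{-m}|/|d_m|. $$
Inserting the classical discriminant formulas in the three residue classes of $m$ modulo $4$ (so that $-m \equiv 3, 2, 1 \bmod 4$ when $m \equiv 1, 2, 3 \bmod 4$, respectively), the right-hand side comes out as $16$, $4$ and $1$.

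It remains to recover the ideal $\disc(K/F)$ from its norm. Since $-1$ is a unit, $\disc(K/F)$ divides $(4)$ and is therefore supported only on the primes of $F$ above $2$. If $m \equiv 3 \bmod 4$, the norm is $1$, so $\disc(K/F) = (1)$ and $\frf = 1$; alternatively, $-m \equiv 1 \bmod 4$ and $(-m) = (\sqrt{m}\,)^2$ make $-m$ a $2$-primary generator of a square, whence $K = F(\sqrt{-m}\,)$ is unramified over $F$ at every finite prime. If $m \equiv 2 \bmod 4$, then $2$ ramifies in $F$, say $(2) = \fp^2$ with $N\fp = 2$, so $(4) = \fp^4$; the unique divisor of $\fp^4$ of norm $4$ is $\fp^2$, which equals $(2)$, hence $\frf = 2$. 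If $m \equiv 1 \bmod 4$, then $2$ is unramified in $F$, and whether it splits or remains inert the only divisor of $(4)$ of norm $16$ is $(4)$ itself, so $\frf = 4$. The one step that genuinely needs care is this last passage from the norm of $\disc(K/F)$ to the ideal; everything else is bookkeeping with classical discriminants, and that obstacle is resolved precisely by the divisibility $\disc(K/F) \mid (4)$ together with the known factorization of $2$ in $F$.
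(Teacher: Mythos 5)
Your argument is correct. It identifies the conductor of $\kappa_{-1}$ with the relative discriminant $\disc(F(i)/F)$ via the earlier proposition on Kronecker characters, computes the norm of that ideal by playing the conductor--discriminant formula for the biquadratic field $\Q(\sqrt{m},i)$ against the tower formula $\disc(K/\Q)=N_{F/\Q}(\disc(K/F))\disc(F/\Q)^2$, and then correctly recovers the ideal itself from its norm using $\disc(K/F)\mid(4)$ and the splitting of $2$ in $F$ --- the last step being the one place where care is genuinely needed, and you handle it properly in all three residue classes. This is a full working-out of the route the paper merely asserts ("this follows easily from the conductor--discriminant formula"): the only argument the paper actually displays is the alternative \emph{direct} verification, carried out for $m\equiv 2\bmod 4$, namely evaluating $(\frac{-1}{\alpha})\equiv(-1)^{(N\alpha-1)/2}$ by Euler's criterion for $\alpha=a+b\sqrt{m}\gg 0$ and observing that the result $(-1)^b$ depends only on $\alpha\bmod 2$. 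The paper's congruence computation is more elementary and exhibits the character values explicitly, but covers only one case; your global discriminant computation is uniform over all three cases and needs no case-by-case congruence manipulation, at the cost of invoking the transitivity of discriminants in towers. Either route is acceptable; yours is, if anything, the more complete proof of the stated trichotomy.
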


This follows easily from the conductor-discriminant formula, and it is
also easily verified directly. If $m \equiv 2 \bmod 4$ and
$\alpha = a + b\sqrt{m} \gg 0$, for example, we have
$$ \Big(\frac{-1}{\alpha}\Big) \equiv (-1)^{\frac{a^2 - mb^2 - 1}2}
   =  (-1)^{\frac{a^2 - 1}2} (-1)^{\frac{mb^2}2} = (-1)^{b} \bmod \alpha, $$ 
hence
$$ \Big(\frac{-1}{\alpha}\Big) = \begin{cases}
  +1 & \text{ if } b \equiv 0 \bmod 2,
       \text{ i.e., } \alpha \equiv 1 \bmod 2, \\
  -1 & \text{ if } b \equiv 1 \bmod 2, 
       \text{ i.e., } \alpha \equiv 1 + \sqrt{m} \bmod 2.
  \end{cases} $$

The Kronecker character $\kappa_\eps = (\frac{\eps}{\cdot}\,)$,
where $\eps > 0$ is the fundamental unit of $F$, is trivial if
and only if $F(\sqrt{\eps}\,)/F$ is unramified. This is the case
for $m = 34$, where $\sqrt{35 + 6\sqrt{34}} = 3\sqrt{2} + \sqrt{17}$.
For $m = 6$, the element $\sqrt{-\eps} = \sqrt{-2} + \sqrt{-3}$ generates a
quadratic extension unramified at all finite primes, hence
$\kappa_{-1} = \kappa_{\eps}$.

\subsection*{Ray class groups}

Before we give a proof of Theorem~\ref{TM1} we recall a few basic facts about
ray class groups. The ray class group modulo $\fm$ is defined by the exact
sequence
$$ \begin{CD}
  1  @>>> P_\fm/P_\fm^1  @>>> I_\fm/P_\fm^1 @>>> I_\fm/P_\fm @>>> 1 
  \end{CD} $$
Here $I_\fm$ is the group of fractional ideals coprime to $\fm$ and $P_\fm$
its subgroup of principal ideals in $I_\fm$; the ray $P_\fm^1$
consists of all ideals $(\alpha)$ generated by elements
$\alpha \equiv 1 \bmod \fm$ (congruence in the multiplicative sense).
Observe that $I_\fm/P_\fm \simeq \Cl(F)$ since every ideal class contains
ideals coprime to $\fm$, and that $I_\fm/P_\fm^1 \simeq \Cl_F\{\fm\}$
by definition. Below we will use the abbreviation $R_\fm = P_\fm/P_\fm^1$.

The connection with units is provided by the exact sequence
$$ \begin{CD}
   1 @>>> E/E_\fm @>>> (\cO_F/\fm)^\times @>>> R_\fm @>>> 1,
  \end{CD} $$
where $E_\fm$ is the group of units $\eps \equiv 1 \bmod \fm$.
These sequences imply the well known formula
$$ \# \Cl_F\{\fm\} = h \cdot \frac{\Phi(\fm)}{(E:E_\fm)}. $$

The following simple lemma will be used repeatedly:

\begin{lem}\label{LTen}
  If the sequence
  $$ \begin{CD}
    1 @>>> A @>{\iota}>> B @>{\pi}>> C @>>> 1
  \end{CD} $$
  of abelian groups is exact, then so is  
  $$ \begin{CD}
    1 @>>> A/A\cap B^2 @>{\biota}>> B/B^2 @>{\bpi}>> C/C^2 @>>> 1.
  \end{CD} $$
\end{lem}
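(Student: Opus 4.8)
The plan is to treat this as an elementary diagram chase, identifying $A$ with its image $\iota(A)\subseteq B$ throughout, so that $A\cap B^2$ makes sense and $\biota$ is simply the map induced by $\iota$ on the quotients (well defined because $\iota$ carries $A\cap B^2$ into $B^2$). One could equally deduce it from the snake lemma applied to the squaring endomorphism of the given short exact sequence, whose cokernels are $A/A^2$, $B/B^2$, $C/C^2$; the asserted sequence is the cokernel part of the resulting long exact sequence, once one notes that the image of $A/A^2$ in $B/B^2$ equals $AB^2/B^2\cong A/(A\cap B^2)$. The direct chase is shorter, so I would present that.

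First I would verify that $\biota$ is injective: an element $a(A\cap B^2)$ lies in $\ker\biota$ precisely when $\iota(a)\in B^2$, and since also $\iota(a)\in A$ this forces $\iota(a)\in A\cap B^2$, i.e. the class is trivial. Next, $\bpi$ is surjective simply because $\pi$ is: every class in $C/C^2$ has the form $\pi(b)C^2$. The composite $\bpi\circ\biota$ is trivial because $\pi\circ\iota$ is, so $\im\biota\subseteq\ker\bpi$.

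The one step that needs an actual argument is the reverse inclusion $\ker\bpi\subseteq\im\biota$, giving exactness at $B/B^2$. Given $bB^2\in\ker\bpi$, write $\pi(b)=c^2$ and lift $c=\pi(b')$ using surjectivity of $\pi$; then $\pi\bigl(b(b')^{-2}\bigr)=1$, so $b(b')^{-2}\in\ker\pi=\im\iota$, say $b(b')^{-2}=\iota(a)$, whence $bB^2=\iota(a)B^2=\biota\bigl(a(A\cap B^2)\bigr)$. I do not expect a genuine obstacle here; the only place where the hypothesis that the original sequence is exact (and not merely a complex) is used in an essential, non-formal way is this lifting step, where surjectivity of $\pi$ and exactness at $B$ are both invoked.
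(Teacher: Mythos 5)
Your proof is correct and follows essentially the same route as the paper's: identify $A$ with its image in $B$, check injectivity of $\biota$ directly, and establish $\ker\bpi\subseteq\im\biota$ by lifting a square root of $\pi(b)$ through $\pi$ and using exactness at $B$. The additional remarks on surjectivity of $\bpi$ and triviality of $\bpi\circ\biota$ (which the paper omits as routine) and the aside about the snake-lemma alternative do not change the substance.
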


\begin{proof}
  We assume that $\iota$ is the inclusion map. 
  We only have to show that $\biota$ is injective and that
  $\ker \bpi = \im \biota$.

  Now $\iota(a) \in B^2$ implies that $a \in A \cap B^2$, hence
  $\biota$ is injective.

  Next $\pi(b) \in C^2$ implies $\pi(b) = \pi(b_1)^2$ with $\pi(b_1) = c$,
  hence $b/b_1^2 \in \ker \pi = \im \iota$. Thus $b/b_1^2 = a$, hence
  $b = ab_1^2$ and $bB^2 = aB^2 \in \im \biota$.
\end{proof}

\begin{cor}
  The sequence
  $$ \begin{CD}\label{CD1}
    1 @>>> E/E_4 @>>>  M_4/M_4^2 @>>> R_4/R_4^2 @>>>  1
  \end{CD} $$
  is exact.
\end{cor}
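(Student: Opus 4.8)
The plan is to read the corollary off Lemma~\ref{LTen}, fed with the exact sequence relating units to the residue group $M_4$. Recall from the subsection on ray class groups that, taking $\fm = (4)$ there, one has an exact sequence
$$ \begin{CD}
  1 @>>> E/E_4 @>>> M_4 @>>> R_4 @>>> 1,
\end{CD} $$
where for the moment $E_4$ denotes the group of units $\eps \equiv 1 \bmod 4$ and $M_4 = (\cO_F/4\cO_F)^\times$. Applying Lemma~\ref{LTen} with $A = E/E_4$, $B = M_4$, $C = R_4$ at once yields the exact sequence
$$ \begin{CD}
  1 @>>> (E/E_4)/((E/E_4)\cap M_4^2) @>>> M_4/M_4^2 @>>> R_4/R_4^2 @>>> 1,
\end{CD} $$
in which $E/E_4$ is viewed as a subgroup of $M_4$ via the injection $\eps E_4 \mapsto \eps \bmod 4$.

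It then remains to identify the left-hand term with the group $E/E_4$ of the statement, i.e.\ with $E$ modulo the units congruent to a square modulo $4$. Under the identification above, $(E/E_4)\cap M_4^2$ is the image of $U = \{\eps \in E : \eps \equiv \gamma^2 \bmod 4 \text{ for some } \gamma \in M_4\}$, and I would observe that $U$ coincides with the group of units congruent to a square modulo $4$: one inclusion is trivial, and for the other, if a unit $\eps$ satisfies $\eps \equiv \xi^2 \bmod 4$ then $\xi$ must be coprime to $2$, since a prime $\fp \mid 2$ dividing $\xi$ would divide $\xi^2 + 4\mu = \eps$, contradicting that $\eps$ is a unit; hence $\xi$ gives a class in $M_4$. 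Since $E_4 \subseteq U$ (as $1 \in M_4^2$), dividing $E/E_4$ by the image $U/E_4$ leaves exactly $E/U$, that is, $E$ modulo the units congruent to a square modulo $4$, as desired.

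I do not anticipate a real obstacle: Lemma~\ref{LTen} supplies all the homological content, and the only point needing care is the purely notational reconciliation of the two normalizations of $E_4$ --- ``units $\equiv 1 \bmod 4$'' in the ray-class sequence versus ``units $\equiv$ a square $\bmod 4$'' in the corollary --- together with the elementary remark above that for a unit these produce the same quotient of $M_4$. I would spell this identification out so that the displayed sequence reads verbatim as stated.
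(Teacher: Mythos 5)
Your proof is correct and follows essentially the same route as the paper: apply Lemma~\ref{LTen} to the unit sequence $1 \lra E/E_4 \lra M_4 \lra R_4 \lra 1$ and then identify the left-hand quotient, a step the paper compresses into the single observation that $\eps E_4$ lies in $M_4^2$ if and only if $\eps \in E_4$. Your explicit reconciliation of the two normalizations of $E_4$ (units $\equiv 1 \bmod 4$ in the ray-class sequence versus units congruent to a square modulo $4$ in the corollary), including the remark that a square root of a unit modulo $4$ is necessarily coprime to $2$, is a helpful clarification but not a different argument.
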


This follows by applying Lemma \ref{LTen} to the exact sequence
$$ \begin{CD}
   1 @>>> E/E_4 @>>> M_4  @>>> R_4 @>>> 1
  \end{CD} $$
by observing that $\eps E_4$ is in $M_4^2$ if and only if $\eps \in E_4$.

\begin{cor}\label{CorRC}
  The sequence
  $$ \begin{CD}
    1 @>>> P_F /P_F \cap I_F^2 @>>> I_F/I_F^2 @>>> \Cl(F)/\Cl(F)^2  @>>> 1
  \end{CD} $$
  is exact.
\end{cor}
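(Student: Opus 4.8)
The plan is to obtain this corollary as a direct application of Lemma~\ref{LTen}, in exactly the same way the preceding corollaries were derived. I would take $A = P_F$, the group of principal fractional ideals of $F$; $B = I_F$, the group of all fractional ideals of $F$; and $C = \Cl(F)$. The sequence
$$ \begin{CD} 1 @>>> P_F @>>> I_F @>>> \Cl(F) @>>> 1 \end{CD} $$
is exact essentially by the definition of the ideal class group, and here no coprimality conditions enter: we are working with the full ideal group, i.e.\ with modulus $(1)$, so that $P_F^1 = P_F$ and $\Cl_F\{(1)\} = I_F/P_F^1 = I_F/P_F = \Cl(F)$.

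Next I would simply feed this short exact sequence into Lemma~\ref{LTen}. It produces the exact sequence
$$ \begin{CD} 1 @>>> P_F/(P_F \cap I_F^2) @>>> I_F/I_F^2 @>>> \Cl(F)/\Cl(F)^2 @>>> 1, \end{CD} $$
which is precisely the assertion of the corollary, since $A/(A\cap B^2) = P_F/(P_F\cap I_F^2)$, $B/B^2 = I_F/I_F^2$, and $C/C^2 = \Cl(F)/\Cl(F)^2$.

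There is no real obstacle here; the only thing that needs checking is the hypothesis of Lemma~\ref{LTen}, namely exactness of $1 \to P_F \to I_F \to \Cl(F) \to 1$, which is immediate. If desired, one may additionally remark that $I_F$ is free abelian on the prime ideals of $F$, so $I_F/I_F^2$ is the $\FF_2$-vector space with basis the classes of the primes; but this structural observation is not needed for the proof.
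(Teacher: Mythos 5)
Your proof is correct and matches the paper's own argument exactly: the paper likewise derives the corollary by applying Lemma~\ref{LTen} to the defining exact sequence $1 \to P_F \to I_F \to \Cl(F) \to 1$ of the ideal class group. The extra remarks about the trivial modulus and the free abelian structure of $I_F$ are harmless but not needed.
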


Here $I_F$ denotes the group of nonzero factional ideals and 
$P_F$ its subgroup of principal ideals. The exactness of the sequence
follows by applying Lemma \ref{LTen} to the exact sequence
$$ \begin{CD}
   1 @>>> P_F @>>> I_F  @>>> \Cl(F) @>>> 1
  \end{CD} $$
that defines the ideal class group.

\begin{cor}
  The sequence
  $$ 1 \lra P_4/P_4 \cap P_4^1 I_4^2  \lra \Cl_F\{4\}/\Cl_F\{4\}^4
               \lra \Cl(F)/\Cl(F)^2 \lra 1 $$
  is exact.
\end{cor}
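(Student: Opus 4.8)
The plan is to derive the sequence by applying Lemma~\ref{LTen} to the defining exact sequence of the ray class group modulo $(4)$,
$$ \begin{CD} 1 @>>> P_4/P_4^1 @>>> I_4/P_4^1 @>>> I_4/P_4 @>>> 1, \end{CD} $$
which in the abbreviations $R_4 = P_4/P_4^1$, $\Cl_F\{4\} = I_4/P_4^1$ and $\Cl(F) = I_4/P_4$ reads $1 \to R_4 \to \Cl_F\{4\} \to \Cl(F) \to 1$. First I would identify the left-hand term. Unwinding definitions, an ideal $x \in P_4$ lies in $P_4 \cap P_4^1 I_4^2$ exactly when $x \in P_4^1(P_4 \cap I_4^2)$ (if $x = pb$ with $p \in P_4^1 \subseteq P_4$ and $b \in I_4^2$, then $b = p^{-1}x \in P_4 \cap I_4^2$). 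Moreover the image of $P_4 \cap I_4^2$ in $R_4 = P_4/P_4^1$ is precisely $R_4 \cap \Cl_F\{4\}^2$, since a principal class is a square in $\Cl_F\{4\}$ iff it is represented by an element of $P_4 \cap I_4^2$. Hence $P_4/(P_4 \cap P_4^1 I_4^2) \cong R_4/(R_4 \cap \Cl_F\{4\}^2)$, which is the natural kernel term attached to the squaring operation on the ray class sequence.

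Next I would pin down the two maps. The right-hand surjection is the composite $\Cl_F\{4\} \to \Cl(F) \to \Cl(F)/\Cl(F)^2$; since $\Cl_F\{4\}^4$ maps into $\Cl(F)^4 \subseteq \Cl(F)^2$, it factors through $\Cl_F\{4\}/\Cl_F\{4\}^4$, and a short computation shows its kernel to be $(R_4\,\Cl_F\{4\}^2)/\Cl_F\{4\}^4$. The left-hand arrow is the map induced by the inclusion $R_4 \hookrightarrow \Cl_F\{4\}$ followed by reduction modulo fourth powers; the content is to check that it is well defined on $R_4/(R_4 \cap \Cl_F\{4\}^2)$ with image exactly the kernel just described. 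Once these two identifications are in hand, exactness at $\Cl(F)/\Cl(F)^2$ and at the left are routine.

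The main obstacle is the middle term: the sequence carries fourth powers $\Cl_F\{4\}^4$ in the middle but only squares $\Cl(F)^2$ on the right, so the verification does not reduce to a single application of Lemma~\ref{LTen}, which matches squares to squares. The device I would use to bridge the two levels is the squaring homomorphism
$$ \Cl_F\{4\}/\Cl_F\{4\}^2 \xrightarrow{\ \ } \Cl_F\{4\}^2/\Cl_F\{4\}^4,\qquad g\,\Cl_F\{4\}^2 \longmapsto g^2\,\Cl_F\{4\}^4, $$
which is always surjective; combining it with the exponent-$2$ sequence produced by Lemma~\ref{LTen} should promote the square quotient occurring in the middle to the fourth-power quotient while leaving the outer terms intact. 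The delicate point, and the step where I expect to need the genuine structural input about the ray class group at the prime $2$ rather than a formality about finite abelian groups, is verifying that this squaring map is an \emph{isomorphism} and not merely a surjection, equivalently that the relevant $2$-torsion of $\Cl_F\{4\}$ is absorbed into $\Cl_F\{4\}^2$; this is exactly what makes the left-hand arrow land in and fill out the kernel $(R_4\,\Cl_F\{4\}^2)/\Cl_F\{4\}^4$, and it is where the special features of $\Cl_F\{4\}$ modulo $(4)$ must be invoked.
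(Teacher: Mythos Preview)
Your first paragraph is exactly the paper's proof: apply Lemma~\ref{LTen} to the exact sequence
\[
1 \lra R_4 \lra \Cl_F\{4\} \lra \Cl(F) \lra 1
\]
and unwind $R_4/(R_4\cap\Cl_F\{4\}^2)\cong P_4/(P_4\cap P_4^1 I_4^2)$. That is the entire argument.

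The exponent~$4$ in the middle term of the stated corollary is a typographical slip for~$2$. The paper's own one-line proof (``apply Lemma~\ref{LTen}'') produces $B/B^2$, not $B/B^4$; every subsequent use of this corollary in the paper (the fundamental diagram, the formula $\SCl(F)\simeq\Cl_F\{4\}/\Cl_F\{4\}^2$, the order computation) has the exponent~$2$; and Lemma~\ref{LTen} simply does not yield fourth powers.

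Your attempt to rescue the literal statement cannot succeed, because the literal statement is false in general. Comparing orders, the sequence with $\Cl_F\{4\}^4$ in the middle would force
\[
\bigl|\Cl_F\{4\}/\Cl_F\{4\}^4\bigr| \;=\; \bigl|P_4/(P_4\cap P_4^1 I_4^2)\bigr|\cdot\bigl|\Cl(F)/\Cl(F)^2\bigr| \;=\; \bigl|\Cl_F\{4\}/\Cl_F\{4\}^2\bigr|,
\]
the last equality being the genuine corollary. This says $\Cl_F\{4\}^2=\Cl_F\{4\}^4$, i.e.\ the $2$-Sylow subgroup of $\Cl_F\{4\}$ is elementary abelian. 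There is no ``structural input about the ray class group at the prime~$2$'' that guarantees this; it fails whenever $\Cl_F\{4\}$ contains an element of order~$4$. Equivalently, your squaring map $\Cl_F\{4\}/\Cl_F\{4\}^2\to\Cl_F\{4\}^2/\Cl_F\{4\}^4$ has kernel the image of the $2$-torsion, which is nontrivial precisely when the $2$-Sylow has a $\Z/2\Z$ direct summand. So drop the second half of your proposal: once the exponent is corrected to~$2$, your first paragraph is complete and matches the paper verbatim.
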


This follows by applying Lemma \ref{LTen} to the exact sequence
$$ \begin{CD}
   1 @>>> R_4 @>>> \Cl_F\{4\}  @>>> \Cl(F) @>>> 1
  \end{CD} $$
that defines the ray class group modulo $4$.

\subsection{Proof of the Main Theorem}
The main step in the proof of Thm.~\ref{TM1} is verifying the exactness
of the first two rows of the commutative diagram in Figure~\ref{CDSCl}.
Recall that we know
\begin{align*}
  \Kr_4(F) & \simeq \Sel(F)/\Sel_4(F), & \# \Kr_4(F) & = 2^{r+s-(\rho^+-\rho)} \\
  \Dir_4(F) & \simeq M_4/M_4^2,         & \# \Dir_4(F) & = 2^n, \\
  \SCl_4(F) & \simeq P_4 \cap P_4^1 I_4^2, & \#  \SCl_4(F) & = 2^{s+\rho^+ - \rho}.
\end{align*}
The exactness of the fundamental diagram then implies
$$ \SCl(F) \simeq \Cl_F\{4\}/ \Cl_F\{4\}^2, $$
as well as
$$ \# \SCl(F) = \# \SCl_4(F) \cdot (\Cl(F):\Cl(F)^2) = 2^{\rho^+ + s}. $$

\begin{figure}[ht!]
$$ \begin{CD}
  @.  1     @.    1  @. 1 @. \\  
  @. @VVV  @VVV  @VVV @. \\
  1 @>>> \Kr_4(F) @>>> \Kr(F) @>>> P_F/P_F \cap I_F^2 @>>> 1 \\
  @. @VVV  @VVV  @VVV @. \\
  1  @>>> \Dir_4(F) @>{\iota_2}>> \Dir(F) @>{\pi_2}>> I_F/I_F^2 @>>> 1 \\
  @. @VVV  @VVV  @VVV @. \\
  1 @>>>  \SCl_4(F) @>>> \SCl(F) @>>> \Cl(F)/\Cl(F)^2 @>>> 1 \\
  @. @VVV  @VVV  @VVV @. \\  
  @.  1     @.    1  @. 1 
  \end{CD} $$

  \caption{Fundamental diagram for the determination of $\SCl(F)$.}\label{CDSCl}
\end{figure}

  \begin{figure}[ht!]
  $$ \begin{CD}
    1 @>>> \Kr_4(F) @>>> \Dir_4(F) @>>> \SCl_4(F) @>>> 1 \\
    @. @VVV  @VVV  @VVV @. \\
    1 @>>> \frac{\Sel(F)}{\Sel_4(F)} @>>> \frac{M_4}{M_4^2}
      @>>> \frac{P_4}{P_4 \cap P_4^1 I_4^2} @>>> 1
    \end{CD} $$

    \bigskip
    
    $$ \begin{CD}
    1 @>>> \SCl_4(F) @>>> \SCl(F) @>>> \Cl(F)/\Cl(F)^2 @>>> 1 \\
    @. @VVV  @VVV  @VVV @. \\
    1 @>>> \frac{P_4}{P_4 \cap P_4^1I_4^2} @>>> \frac{\Cl_F\{4\}}{\Cl_F\{4\}^2}
                               @>>> \frac{\Cl(F)}{\Cl(F)^2} @>>> 1
    \end{CD} $$    
    \caption{The sequences in the left column and the bottom row;
      vertical maps are isomorphisms.}\label{CDSCl2}
  \end{figure}

The vertical sequences in the left and middle column of Fig. (\ref{CDSCl})
are the definitions of the groups at the bottom; the exactness of the
sequence in the right column is Cor. \ref{CorRC}.
The snake lemma then implies that the sequence in
the third row is exact. This implies that $\SCl(F)$  has the same order
as $\Cl_F\{4\}/\Cl_F\{4\}^2$, and hence is isomorphic to this group since
both are elementary abelian.

\subsection*{Exactness of the top row}
Since $\Kr_4(F) \simeq \Sel(F)/\Sel_4(F)$ we have to show that the
sequence 
$$ \begin{CD}
  1 @>>> \Sel(F)/\Sel_4(F) @>>> \Kr(F) @>>> P_F /P_F \cap I_F^2  @>>> 1
\end{CD} $$
is exact.

Define $j: \Sel(F)/\Sel_4(F) \lra \Kr(F)$ as the map sending
$\alpha \Fts$ to the Kronecker character $\kappa_\alpha$. This map
is well defined since Kronecker characters $\kappa_\alpha$ with
$\alpha \in \Sel_4(F)$ are trivial. Let $\lambda$ denote the map
$\Kr(F) \lra P_F/P_F \cap I_F^2$ induced by sending $\kappa_\alpha$ to
$(\alpha)$. Clearly $\im j \subseteq \ker \lambda$.  Assume now that
$\kappa_\alpha \in \ker \lambda$; then $\alpha = \fa^2$ is the square
of an ideal, hence $\alpha \Fts \in \im j$. Since $\lambda$ is clearly
surjective, this proves our claim.
  
\subsection*{Exactness of the middle row}
Next we prove the exactness of
$$ \begin{CD}
      1  @>>> M_4/M_4^2 @>{\iota_2}>> \Dir(F) @>{\pi_2}>> I_F/I_F^2 @>>> 1.
   \end{CD} $$
The map $\iota_2$ is defined as in Prop.~\ref{PD4}; recall that $\iota_2$
is injective, that it makes the left upper square commutative, and that
$\im \iota_2 = \Dir_4(F)$.

Let $\chi$ be a primitive quadratic Dirichlet character with conductor
$\fm$; then we set $\pi_2(\chi) = \fm I_F^2$. This is a homomorphism:
If $\chi$ and $\psi$ are  primitive quadratic Dirichlet characters with
conductor $\fm$ and $\fn$, then $\chi\psi$ has conductor $\fm\fn/\fa^2$
for a suitable ideal $\fa$. This is easy to see for odd prime ideals
dividing the conductor; for prime ideals $\fl$ above $2$ it follows from
the fact that the conductor of $\chi$ is an even power if and only if
$\chi \in \Dir_4(F)$.

We now claim that $\pi_2$ is surjective. Clearly every prime ideal with
odd norm is the conductor of a primitive Dirichlet character. If $\fl$
is a prime ideal above $2$, choose an $\alpha$ divisible exactly by $\fl$,
and take the $\fl$-part of the Kronecker symbol $\kappa_\alpha$. Since
$I_F/I_F^2$ is generated by prime ideals, $\pi_2$ is onto.

If $\chi \in \ker \pi_2$, then the conductor of $\chi$ is the square of
an ideal. Since $\chi$ is primitive, this implies that $\chi$ is defined
modulo an ideal dividing $4$, hence it is in the image of $\iota_2$
by Prop.~\ref{PD4}.

If we splice together the sequences in the left column and the bottom row
we obtain with the exact sequence
$$ \Sel(F)/\Sel_4(F) \hra M_4/M_4^2 \lra \Cl_F\{4\}/\Cl_F\{4\}^2
     \twoheadrightarrow \Cl(F)/\Cl(F)^2 $$
already derived in \cite{LemSel}.

If we write $\chi$ as the product $\chi_e\chi_o$ of a character $\chi_e$
with even and $\chi_o$ with odd conductor and send $[\chi] \in \SCl(F)$
to the ideal class of the conductor of $\chi_o$, this seems to lead into
delicate technical calculations involving conductors that are powers of
prime ideals above $(2)$ (see Hasse \cite[Satz 16$_2$]{HasZB}).

\subsection*{Examples}
The examples in the following table are those where $\SCl(F)$ is strictly
larger than $\SCl_4(F)$. 

$$ \begin{array}{c|cccc}
  \rsp  F      & \Q(\sqrt{-5}\,) & \Q(\sqrt{10}\,) &
               \Q(\sqrt{15}\,) & \Q(\sqrt{34}\,) \\ \hline 
  \rsp \SCl(F) & \la \chi_2, \chi_3 \ra & \la \chi_3 \ra &
                 \la \chi_2, \chi_3 \ra & \la \chi_\fp \ra 
  \end{array} $$
The separant class groups defined modulo $4$ were already described above.
The difference between $\SCl_4(F)$ and $\SCl(F)$ comes from nontrivial
$2$-class groups.
\begin{itemize}
\item $F = \Q(\sqrt{-5}\,)$. We already know that there is no Kronecker
  character with conductor $2$. There is also no Kronecker character
  with conductor $\fp$, the two prime ideals above $3$, since
  there is no quadratic extension ramified exactly at $\fp$.
  The product of the Dirichlet characters $\chi_\fp$ and $\chi_{\fp'}$
  is the Kronecker character $(\frac{-3}{\cdot})$, hence these two
  characters are equivalent in $\SCl(F)$.
\item $F = \Q(\sqrt{10}\,)$. The prime ideals above $3$
  are not principal, so the Dirichlet characters $\chi_3$ with conductor
  $3$ are not Kronecker characters.  
\item $F = \Q(\sqrt{34}\,)$. The fundamental unit $\eps = 35 + 6 \sqrt{34}$
  generates $E_4/E^2$. The Dirichlet character with conductor $3$ is not
  a Kronecker character.
\end{itemize}

\section{Separants}

Relative discriminants $\disc(K/F)$ of quadratic extensions $K/F$ of
number fields contain information about the ramified primes. Separants,
which we will define below, contain more information: they are (cosets of)
algebraic numbers whose square roots generate the quadratic extensions
in question. In this section we will define separants for number fields
$F$ with odd class number, and we will prove their most basic properties.

For quadratic extensions $K/F$, we let $\Ram(K/F)$ denote the set
of ramified places of $F$, and $\Ram_f(K/F)$ its subset of finite
ramified places, i.e., of prime ideals in $F$ that are ramified in $K/F$.

\begin{prop}
  Let $K/F$ be a quadratic extension of number fields. If $F$ has
  odd class number and $K = F(\sqrt{\alpha}\,)$, then there
  exists an integer $\delta \in F^\times$ unique up to squares of units
  such that
  \begin{itemize}
  \item $F(\sqrt{\alpha}\,) =F(\sqrt{\delta}\,)$;
  \item $\disc(K/F)^h = (\delta)$.
  \end{itemize}
\end{prop}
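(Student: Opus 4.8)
The plan is to start from a generator $\alpha \in F^\times$ with $K = F(\sqrt{\alpha}\,)$ and modify it within its square class to produce $\delta$, using the hypothesis that the class number $h$ is odd. First I would recall from the earlier discussion that the relative discriminant $\fd = \disc(K/F)$ is an ideal of $F$ whose prime factorization records the ramified primes of $K/F$ (including the appropriate powers at primes above $2$). Since $h$ is odd, the $h$-th power map on $\Cl(F)$ is a bijection, so there is an ideal $\fc$ and an element such that $\fd^h = \fc^2 \cdot (\mu)$ is not quite what I want; rather, the cleanest route is: the ideal class of $\fd$ has some order $d \mid h$, hence $\fd^h$ is principal (as $d \mid h$), say $\fd^h = (\gamma)$ for some integer $\gamma \in F^\times$, unique up to units. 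This already gives the second bullet with $\delta := \gamma$ as soon as we check the first bullet, so the real content is arranging $F(\sqrt{\gamma}\,) = F(\sqrt{\alpha}\,)$, i.e. $\gamma \equiv \alpha \bmod \Fts$.

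The key step is therefore to show we may choose the principal generator of $\fd^h$ inside the square class of $\alpha$. Here is the mechanism: since $K/F$ is quadratic, $\disc(K/F)$ together with its behaviour at $2$ is determined by $\alpha$ up to squares; concretely, after scaling $\alpha$ by the square of an ideal we may assume $(\alpha) = \fa^2 \fd$ for a suitable integral ideal $\fa$ (the square part is absorbed by the ramified-prime structure of a quadratic extension — every prime divides $(\alpha)$ to a power congruent mod $2$ to its exponent in $\fd$). Raising to the $h$-th power, $(\alpha^h) = \fa^{2h}\fd^h = (\fa^h)^2 \cdot \fd^h$. Now use oddness again: $\fa^h$ need not be principal, but $\fa^{2h} = (\fa^h)^2$ and, more to the point, I can instead run the argument so that $\fd$ itself appears to an even-plus-principal power. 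The slick formulation: since $h$ is odd, write $h = 2k+1$; then $\fd^h = \fd^{2k}\cdot \fd = (\fd^k)^2 \cdot \fd$, and I want $\fd$ principal — which it is exactly when $K/F$ is unramified, so this does not work directly. Instead I take $\fd^h$ principal because $[\fd]^h = [\fd]^{\,\text{ord}}\cdot(\dots)$ where the order of $[\fd]$ divides $h$; pick any generator $\gamma$ of $\fd^h$. Then $(\alpha^h / \gamma) = (\fa^h)^2$ is the square of an ideal, and since the ideal class group has odd order, $\Sel(F)$ is generated by units (by the results quoted from \cite{LemSel}), so $\alpha^h/\gamma = u \beta^2$ for a unit $u$ and $\beta \in F^\times$. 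Replacing $\gamma$ by $u\gamma$ we get $\alpha^h/\gamma \in \Fts$, so $F(\sqrt{\gamma}\,) = F(\sqrt{\alpha^h}\,) = F(\sqrt{\alpha}\,)$ as $h$ is odd, and $(\gamma) = \fd^h$ still. Set $\delta = \gamma$.

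For uniqueness up to squares of units: if $\delta, \delta'$ both satisfy the two bullets, then $F(\sqrt{\delta}\,) = F(\sqrt{\delta'}\,)$ forces $\delta/\delta' = \beta^2$ for some $\beta \in F^\times$, and $(\delta) = (\delta') = \fd^h$ forces $(\beta)^2 = (1)$, hence $(\beta) = (1)$, i.e. $\beta$ is a unit; thus $\delta' = \delta \cdot (\beta^{-1})^2$ with $\beta^{-1} \in E_F$. The main obstacle I anticipate is the bookkeeping at the primes above $2$: the claim that $(\alpha)$ can be taken of the form $\fa^2\fd$ uses that, for a \emph{quadratic} extension, the exponent of each ramified prime in $\disc(K/F)$ has the same parity as its exponent in $(\alpha)$ for a suitably chosen generator — wild ramification at $2$ makes the exact power delicate, but only the parity matters here, and that parity is pinned down by whether the completion $K_\fl/F_\fl$ is ramified, which depends only on the square class of $\alpha$ locally. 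Everything else (oddness of $h$ turning $h$-th powers into bijections, and $\Sel(F) = E_F/E_F^2$ when $h$ is odd) has already been recorded earlier in the paper, so those can be cited rather than reproved.
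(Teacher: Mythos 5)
Your argument is correct and follows essentially the same route as the paper: write $(\alpha)$ (the paper uses $(4\mu)$) in the form $\fa^2\fd$ with $\fd=\disc(K/F)$, raise to the odd power $h$ so that $\fa^h$ and $\fd^h$ become principal, and absorb the resulting unit into the generator $\delta$ of $\fd^h$ so that $\delta\equiv\alpha^h\equiv\alpha \bmod \Fts$. The only cosmetic difference is that the paper gets $(4\mu)=\fa^2\fd$ from the discriminant of the order $\cO_F[\sqrt{\mu}\,]$, whereas you justify the same parity statement locally at each ramified prime; your uniqueness argument (which the paper leaves implicit) is also fine.
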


\begin{proof}
  Assume that $F$ has odd class number $h$, and consider quadratic
  extensions $K = F(\sqrt{\alpha}\,)$. The relative discriminant
  $\fd = \disc(K/F)$ is an ideal, and we know that
  $4\mu = \disc(\sqrt{\mu}\,) = \fa^2\fd$ for some integral
  ideal $\fa$. Raising everything to the $h$-th power we get
  $(4^h\mu^h) = (\alpha^2 \delta)$, where  $\fa^h = (\alpha)$ and
  $\fd^h = (\delta)$; moreover we can (and will) choose $\delta$
  in such a way that $F(\sqrt{\mu}\,) = F(\sqrt{\delta}\,)$. 
\end{proof}

The coset $\sep(K/F) = \delta \cdot E_F^2$ is called the separant of
the quadratic extension $K/F$. Separants form a group $\Sep(F)$
with respect to the following multiplication: if
$K_1 = F(\sqrt{\alpha}\,)$ and $K_2 = F(\sqrt{\beta}\,)$ are
quadratic extensions with separants $\delta_1 E_F^2$ and $\delta_2 E_F^2$, let
$K_3 = F(\sqrt{\alpha\beta}\,)$ be the third quadratic subfield of $K_1K_2/F$,
and set
$$ \delta_1 E_F^2 * \delta_2 E_F^2 = \delta_3 E_F^2, $$
where $\delta_3 E_F^2 = \sep(K_3/F)$. Observe that this is the usual
product if and only if the separants are coprime.
Over the field $F = \Q$, a separant of a quadratic
number field is just its discriminant because $E_\Q^2 = 1$.

\medskip\noindent{\bf Example 2.}
Consider $F = \Q(\sqrt{-23}\,)$ and $K = F(\sqrt{\mu}\,)$ for
$\mu = -5+2\sqrt{-23}$. Since $(\mu) = \fp^2\fq$ for the prime
ideals $\fp = (3,1-\sqrt{-23}\,)$ and $\fq = (13, 4+\sqrt{-23}\,)$,
we have $\Ram_f(K/F) = \{\fq\}$ and $\disc K/F = \fq$. Also we
find $\fq^3 = (\delta)$ for $\delta = -37-6\sqrt{-23}$, and
$\sep(K/F) = \delta E_F^2$.
\medskip

Separants that cannot be written as a product of separants are
called irreducible; separants $\delta \cdot E_F^2$ for which
$(\delta)$ is a power of a prime ideal are called prime separants.

\medskip\noindent{\bf Example 3.}
Consider the quadratic extension $K = F(\sqrt{1+2i}\,)$ of $F = \Q(i)$.
It has relative integral basis $\{1, \frac{1 + \sqrt{1+2i}}{1+i}\}$, and
its relative discriminant is therefore the ideal $(1+i)^2(1+2i)$. The
generator $\delta = 2i(1+2i)$ of this ideal has the property that
$K = F(\sqrt{\delta}\,)$, hence it is the separant of this extension:
$\sep(K/F) = 2i(1+2i)$. Since there is no quadratic extension of $F$
with relative discriminant $\fd = (2)$, this separant cannot be factored
into prime separants.

In my early attempts at defining a separant class group I called
$2i$ an ideal separant, defined as the greatest common divisor of
the separants $2i(1+2i)$ and $(2i(3+2i)$. Only later I realized that
this ideal separant may be identified with the nontrivial Dirichlet
character modulo $2$ in $\Z[i]$.
\medskip

The modulus associated to a separant $\sep(K/F)$ is the conductor
of the extension $K/F$, including the infinite places; two separants
are called coprime if they are coprime as ideals, and strongly coprime
if their associated conductors are coprime.

Since the square root of the separant $\sep(K/F)$ generates the quadratic
extension $K/F$, the splitting of primes in $K/F$ is determined by
the quadratic Kronecker symbol $(\delta/\fp)$.

\section{Unique Factorization into Prime Separants}

We now explain how to generalize the prime discriminant factorization
considered by Goldstein and Sunley to fields with trivial separant
class group. Below, $K_\gen^{(2)}$ denotes the genus $2$-class field of $K$
over the field $F$ (with odd class number), i.e., the maximal elementary
abelian $2$-extension of $F$ unramified over $K$; the extension
$K_\gen^{(2)+}$ is defined similarly, but we allow ramification at infinite
primes.

\begin{thm}\label{TM2}
Let $F$ be an algebraic number field. Then the following assertions
are equivalent:
\begin{enumerate}
\item The separant class group of $F$ is trivial: $\SCl(F) = 1$.
\item $F$ is totally real, and its class number in the strict sense is odd.
\item $F$ has odd class number, and for every Dirichlet character $\chi$
  defined modulo $4$ there is a unit $\eps \in E_F$ such that
  $\chi = (\frac{\eps}{\cdot})$.  
\item For each prime ideal $\fp$ with
  odd norm there is a (unique) quadratic extension $K/F$ with
  $\Ram_f(K/F) = \{ \fp \}$.  
\item The class number of $F$ is odd, and every separant in $\Sep(F)$
  can be written uniquely (up to order) as a product of prime separants.
\item For every quadratic extension $K/F$ we have 
  $(K_\gen^{(2)+}:F) = 2^t$, where $t = \# \Ram_f(K/F)$.
\end{enumerate}
\end{thm}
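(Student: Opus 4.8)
The plan is to prove the equivalences in Theorem~\ref{TM2} by establishing a cycle of implications, using Theorem~\ref{TM1} as the hub. The equivalence $(1)\Leftrightarrow(2)$ is immediate from Theorem~\ref{TM1}, which gives $\#\SCl(F)=2^{\rho^++s}$: this vanishes precisely when $s=0$ (totally real) and $\rho^+=0$ (odd class number in the strict sense). For $(2)\Rightarrow(3)$: if $F$ is totally real with odd strict class number then in particular its ordinary class number is odd, and by Corollary~\ref{CK4} (together with $s=0$, $\rho^+=\rho$) every Dirichlet character defined modulo $4$ is a Kronecker character; since the class number is odd, the preceding Corollary shows every Kronecker character with conductor dividing $4$ has the form $\kappa_\eps$ for a unit $\eps$, so $(3)$ follows. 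For $(3)\Rightarrow(2)$: odd class number plus surjectivity of $\eps E^2\mapsto\kappa_\eps$ onto $\Dir_4(F)$ forces, via the order count $\#\Dir_4(F)=2^n$ and $\#(E/E^2)$, that $F$ be totally real with trivial $\Sel_4(F)$; one reads off $\rho^+=0$. I would organize this around the exact sequence $1\to E_4/E^2\to E/E^2\to\Kr_4(F)\to 1$ from the Corollary and the count $\#\Dir_4(F)=2^n$ from Proposition~\ref{PD4}.

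Next, $(1)\Leftrightarrow(4)$. By the fundamental diagram (Figure~\ref{CDSCl}), $\SCl(F)=1$ iff both $\SCl_4(F)=1$ and the map $\Kr(F)/\Kr_4(F)\to I_F/I_F^2$ surjects onto $\Cl(F)/\Cl(F)^2$ compatibly; unwinding, $\SCl(F)=1$ forces $F$ totally real with odd strict class number. Conversely, given $(2)$, for a prime ideal $\fp$ with odd norm one wants a quadratic $K/F$ ramified exactly at $\fp$: since $\SCl(F)=1$, the primitive quadratic Dirichlet character modulo $\fp$ (which exists because $(\cO_F/\fp)^\times$ has even order) is a Kronecker character $\kappa_\alpha$, and then $K=F(\sqrt{\delta}\,)$ for the separant $\delta$ has $\disc(K/F)=\fp$ by Proposition (the one identifying conductors of Kronecker characters with relative discriminants); uniqueness follows because two such extensions would differ by an everywhere-unramified quadratic extension, impossible when the strict class number is odd. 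For the reverse implication $(4)\Rightarrow(1)$, the existence of prime-separant building blocks for every odd prime together with the modulo-$4$ part (which one gets from $(4)$ applied in conjunction with the structure of $\Dir_4(F)$) shows $\Dir(F)=\Kr(F)$.

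The factorization statement $(5)$ is the analogue of the Goldstein–Sunley theorem. I would deduce $(1)\Rightarrow(5)$ as follows: when $\SCl(F)=1$, every separant $\delta E_F^2=\sep(K/F)$ corresponds to a Kronecker character $\kappa_\delta$, which is a primitive Dirichlet character; the conductor $\disc(K/F)=\prod\fp_i^{e_i}$ factors into prime power parts, each of which — by $(4)$ — is realized by a prime separant, and these multiply (using the $*$-operation and $\SCl(F)=1$ to control the even part) to $\delta E_F^2$. Uniqueness comes from the fact that the conductor of a primitive quadratic Dirichlet character determines it up to the choice at each ramified prime, and triviality of $\SCl(F)$ kills the ambiguity; here the odd-class-number hypothesis is what makes $\Sep(F)$ well-defined in the first place. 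Conversely $(5)\Rightarrow(1)$: unique prime-separant factorization in particular supplies a prime separant for every odd prime ideal, which is $(4)$, already shown equivalent to $(1)$.

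Finally, for $(1)\Leftrightarrow(6)$: the genus $2$-class field $K_\gen^{(2)+}$ is the compositum of $K$ with all quadratic extensions $F(\sqrt{\delta_i}\,)$ where $\delta_i$ runs over prime separants dividing $\sep(K/F)$; its degree over $F$ is $2^{t'}$ where $t'$ is the number of independent such prime separants, and $t'=t=\#\Ram_f(K/F)$ exactly when each ramified prime contributes its own independent prime separant — which is precisely condition $(4)$. So $(6)$ is equivalent to $(4)$, hence to $(1)$. The main obstacle I expect is the handling of prime ideals above $2$: in $(4)$, $(5)$, and $(6)$ the ``odd prime'' phrasing suggests the dyadic contributions must be folded in through the modulo-$4$ analysis (Proposition~\ref{PD4} and Corollary~\ref{CK4}) rather than through prime separants directly, and making the bookkeeping of even-versus-odd conductor parts airtight — especially the claim that the even part of a conductor is governed entirely by $\SCl_4(F)$ — will require the careful conductor computations alluded to in the remark after the middle-row exactness argument (cf.\ Hasse \cite[Satz 16$_2$]{HasZB}). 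I would isolate that as a lemma before assembling the cycle.
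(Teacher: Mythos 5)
Your proposal is correct and follows essentially the same route as the paper: Theorem~\ref{TM1} and the exact sequence $1 \to \SCl_4(F) \to \SCl(F) \to \Cl(F)/\Cl(F)^2 \to 1$ handle (1)--(3), adjusting a generator of $\fp^h$ by a unit to make it $2$-primary gives (4), factoring $\kappa_\delta$ into primitive characters of prime-power conductor gives (5), and Proposition~\ref{Pgen} closes the cycle through (6). The only differences are cosmetic routing (you go $(5)\Rightarrow(4)$ directly where the paper goes $(5)\Rightarrow(6)\Rightarrow(4)$), and the dyadic bookkeeping you flag as the main obstacle is already absorbed by Proposition~\ref{PD4} and the mod-$4$ analysis, so no separate conductor lemma is needed.
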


\begin{proof}
  (1) $\iff$ (2) is a consequence of the formula $\# \SCl(F) = 2^{\rho^++s}$,
  where $\rho^+ = \rk \Cl_2(F)$ and $2s$ is the number of complex
  embeddings of $F$.
  
  (1)  $\iff$ (3) is just a reformulation of the content of the
  exact sequence
  $$ \begin{CD}
    1 @>>> \SCl_4(F) @>>> \SCl(F)  @>>> \Cl(F)/\Cl(F)^2 @>>> 1
    \end{CD} $$

  (1)  $\impl$ (4): Let $\fp$ be a prime ideal with odd norm. Since $F$
  has odd class number, $\fp^h = (\pi)$ for some $\pi  \in \cO_F$. Since
  $\SCl(F) = 1$, every element of $M_4/M_4^2$ is represented by a unit;
  thus there is a unit $\eps$ such that $\eps \pi \equiv \xi^2 \bmod 4$.
  In the quadratic extension $K = F(\sqrt{\pi\eps}\,)$, the only ramified
  prime ideal is $\fp$.

  (4)  $\impl$ (3): We first show that the class number of $F$ is odd.
  Let $\fp$ be a prime ideal with odd norm in $F$, and let $K/F$ be a quadratic
  extension in which exactly $\fp$ ramifies. Then $K = F(\sqrt{\alpha}\,)$
  with $(\alpha) = \fp\fa^2$ for some ideal $\fa$. This implies that
  the ideal class of $\fp$ is a square. Since the $[\fp]$ generate the class
  group, every ideal class is a square. Since the class group is finite,
  its order must be odd.

  Now let $\chi$ be a Dirichlet character defined modulo $4$.
  By the proof of Prop.~\ref{PD4}, $\chi$ is in the image of $M_4/M_4^2$,
  i.e. there is a Kronecker character $\kappa = (\frac{\alpha}{\cdot})$
  such that $\kappa = \chi \psi$, where  $\psi$ is defined modulo an odd ideal.
  By assumption, $\psi$ is a Kronecker character; but then so is $\chi$.
  This implies the claim.
  
  (1)  $\impl$ (5): Let $\delta E_F^2$ be a separant. The Kronecker
  character $\kappa_\delta$ can be written uniquely as a product of
  primitive quadratic Dirichlet characters with prime power conductor:
  $\kappa_\delta = \chi_1 \cdots \chi_t$. Since $\Sep(F) = 1$, each
  factor $\chi_j$ is a Kronecker character $(\delta_j/\,\cdot\,)$ for
  separants $\delta_j$. Since the conductors of these Kronecker characters
  are coprime prime ideal powers we must have
  $\delta E_F^2 = \delta_1 \cdots \delta_t E_F^2$.

  We claim that this factorization is unique. To this end, assume that
  there  are two factorizations
  $\delta E_F^2 = \delta_1 \cdots \delta_t E_F^2
               = \delta_1' \cdots \delta_t' E_F^2$
  of $\delta$ into prime separants.

  Since $K_\gen^{(2)+} = F(\sqrt{\delta_1}, \ldots, \sqrt{\delta_t}\,)$
  is the maximal elementary abelian $2$-extension of $F$ unramified over $K$
  at all finite primes, we must have $\sqrt{\delta_j'} \in K_\gen^+$.
  But since $\delta_j'$ and the $\delta_j$ are prime separants, this is
  only possible if $\delta_j' E_F^2$ is among the  $\delta_j E_F^2$.
  
  (5) $\impl$ (6): Let $K/F$ be a quadratic extension with separant $\delta$.
  By assumption we can write
  $\delta E_F^2 = \delta_1 \cdots \delta_t E_F^2$ as a
  product of prime discriminants that are coprime except for infinite prime
  factors. But then $L = F(\sqrt{\delta_1}, \ldots, \sqrt{\delta_t}\,)/K$
  is unramified at all finite primes, hence equal to the genus field in
  the strict sense.

  (6) $\impl$ (4) is Prop.~\ref{Pgen}. 
\end{proof}

Here is the result we have used in the proof:

\begin{prop}\label{Pgen}
  Let $F$ be a field with odd class number in the strict sense. If,
  for every quadratic extension $K/F$, we have $(K_\gen^{(2)+}:F) = 2^t$,
  where $t = \# \Ram_f(K/F)$, then for each odd prime ideal $\fp$ there
  is a unique quadratic extension $k/F$ in which $\fp$ is the only
  (finite) ramified prime ideal.
\end{prop}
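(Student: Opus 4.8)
The plan is to prove Proposition~\ref{Pgen} by extracting a quadratic extension ramified at a single prime ideal from the hypothesis on genus fields, using the assumption of odd strict class number to control units and ideals. Let $\fp$ be an odd prime ideal. Since $F$ has odd class number, $\fp^h = (\pi)$ for some $\pi \in \cO_F$; because the strict class number is odd as well, we may adjust $\pi$ by a totally positive unit so that $\pi \gg 0$. Consider the quadratic extension $K = F(\sqrt{\pi}\,)$. The finite primes ramifying in $K/F$ are $\fp$ together with possibly some primes above $2$, so $\Ram_f(K/F) = \{\fp\} \cup S$ with $S$ a set of prime ideals dividing $(2)$, and $t := \#\Ram_f(K/F) = 1 + \#S$.

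Next I would apply the hypothesis to this $K$: by assumption $(K_\gen^{(2)+}:F) = 2^t$, so the genus field in the strict sense $K_\gen^{(2)+}/F$ is an elementary abelian $2$-extension of degree $2^t$ containing $K$. I would then argue that $K_\gen^{(2)+}$ is generated over $F$ by square roots $\sqrt{\delta_1}, \ldots, \sqrt{\delta_t}$, where the $\delta_j$ can be chosen so that $(\delta_j)$ is a power of a single prime ideal --- namely, since $F$ has odd class number, a separant argument (as in Theorem~\ref{TM2}, (1)~$\impl$~(5)) lets us normalise each generator so that $(\delta_j)$ is supported on exactly one of the ramified primes. One of these generators, say $\delta_1$, must then correspond to $\fp$: the unramified-over-$K$ condition forces the collection $\{\fp\} \cup S$ to be matched bijectively with the prime-power supports of $\delta_1, \ldots, \delta_t$, since the relative discriminant of $K_\gen^{(2)+}/K$ is trivial while each $\delta_j$ contributes ramification at its own support. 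Thus $F(\sqrt{\delta_1}\,)/F$ is a quadratic extension with $\Ram_f = \{\fp\}$, which gives existence.

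For uniqueness, suppose $k_1 = F(\sqrt{\alpha_1}\,)$ and $k_2 = F(\sqrt{\alpha_2}\,)$ are two quadratic extensions of $F$ each ramified at $\fp$ and at no other finite prime. Then the compositum $k_1 k_2$ contains the third quadratic subfield $k_3 = F(\sqrt{\alpha_1\alpha_2}\,)$, and $k_3/F$ is unramified at every finite prime: at $\fp$ the ramification cancels (both $(\alpha_i)$ have the same odd $\fp$-valuation up to squares), and there is nothing above $2$ or elsewhere. But $F$ has odd strict class number, so by Proposition~\ref{Pgen}'s hypothesis (applied with any quadratic extension, or simply because $\Sel_4(F)$ is trivial) there is no nontrivial quadratic extension of $F$ unramified at all finite primes; hence $k_3 = F$, i.e. $\alpha_1\alpha_2 \in F^{\times\,2}$, so $k_1 = k_2$.

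The main obstacle I expect is the matching step in the existence argument: one has to show carefully that in the decomposition $K_\gen^{(2)+} = F(\sqrt{\delta_1},\dots,\sqrt{\delta_t}\,)$ the prime-power supports of the $\delta_j$ are \emph{exactly} the ramified primes of $K/F$, with no doubling up and no extra primes hidden among those above $(2)$. This is where the delicate local analysis at primes over $2$ enters --- distinguishing genuinely ramified dyadic primes from those where a unit twist removes the ramification --- and it is precisely the point at which the odd-strict-class-number hypothesis (equivalently $\Sel_4(F) = 1$, equivalently $\SCl(F) = 1$) is indispensable, since it guarantees that every Kronecker character with dyadic conductor is realised by a unit and that the prime-separant factorisation of $\kappa_\pi$ is unique.
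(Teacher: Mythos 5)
Your uniqueness argument is fine and coincides with the paper's (the third quadratic subfield of $k_1k_2/F$ would be unramified at all finite primes, which is impossible since the strict class number is odd, i.e.\ $\Sel_4(F)=1$). The existence half, however, has a genuine gap at exactly the point you flag as ``the main obstacle.'' You want to write $K_\gen^{(2)+} = F(\sqrt{\delta_1},\dots,\sqrt{\delta_t}\,)$ with each $(\delta_j)$ supported at a single ramified prime, and you justify this by appealing to the separant factorisation of Theorem~\ref{TM2}, (1)~$\impl$~(5). That implication presupposes $\SCl(F)=1$, which is not among the hypotheses of Proposition~\ref{Pgen}: the proposition only assumes odd strict class number plus the degree condition on genus fields, and it is itself the step (6)~$\impl$~(4) in the equivalence chain of Theorem~\ref{TM2}, so invoking (1)~$\impl$~(5) here is circular. (Your parenthetical claim that odd strict class number is ``equivalently $\SCl(F)=1$'' is also false: by Theorem~\ref{TM1}, $\#\SCl(F)=2^{\rho^++s}$, so triviality additionally requires $F$ to be totally real. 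Odd strict class number is equivalent only to $\Sel_4(F)=1$.) Moreover, producing a $\delta_j$ whose support is exactly $\{\fp\}$ is essentially the statement being proved, so the matching step cannot be taken as known.

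The paper closes this gap without any explicit generators, by a Galois-theoretic argument you could substitute for your matching step. Take any quadratic $K/F$ in which $\fp$ ramifies and set $G=\Gal(K_\gen^{(2)+}/F)\simeq(\Z/2\Z)^t$. Since $K_\gen^{(2)+}/K$ is unramified at all finite primes, the inertia subgroup of each ramified prime $\fq_j$ injects into $\Gal(K/F)$ and hence has order exactly $2$; these $t$ subgroups must generate $G$, for otherwise their span would fix a quadratic extension of $F$ unramified at all finite primes, contradicting $\Sel_4(F)=1$. Thus the $t$ inertia subgroups form a basis of $G$, and the fixed field of the span of those attached to the $\fq_j\ne\fp$ is a quadratic extension $k/F$ unramified outside $\fp$ at the finite primes, hence ramified exactly at $\fp$ (again because $F$ admits no quadratic extension unramified at all finite primes). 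This is precisely the ``ramification fields of degree $2^{t-1}$'' argument in the paper, and it is what your proposal is missing.
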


\begin{proof}
  Let $\fp$ be a prime ideal in $F$ with odd norm. Let $K/F$ be a
  quadratic extension in which $\fp$ is ramified (choose an ideal
  $\fa$ in the inverse class of $[\fp]$ coprime to $\fp$ and write
  $\fp\fa = (\alpha)$; then $K = F(\sqrt{\alpha}\,)$ is such an
  extension). Then $K_\gen^{(2)+}/$ is an elementary abelian extension
  in which every ramified prime ideal has ramification index $2$.
  Let $\fq_1$, \ldots, $\fq_{t-1}$ denote the ramified prime ideals
  different from $\fq$. Their ramification fields have degree $2^{t-1}$
  over $F$. Thus their intersection is a quadratic
  extension $k/F$ in which the $\fq_j$ are unramified. Since $F$
  admits no unramified extension, we must have $\Ram(k/F) = \{\fp\}$.

  The extension $k$ is necessarily unique; if $k'$ is another such extension,
  the quadratic subextension of $kk'/F$ different from $k$ and $k'$ would
  be unramified at all finite primes, contradicting the fact that $F$ has
  odd class number in the strict sense.
\end{proof}

As another application, consider $k = \Q(\sqrt{130}\,)$ and its genus
class field $k_\gen = \Q(\sqrt{2},\sqrt{5},\sqrt{13}\,)$. 
We have $\SCl(F) = 1$ for $F = \Q(\sqrt{2}\,)$, and the unique
factorization of the separant of $F_1 = F(\sqrt{65}\,)$ is
$65 = 5 \cdot 13$. Since we also have $\SCl(L) = 1$ for
$L = F(\sqrt{5}\,)$, the relative discriminant of $k_\gen/L$ factors into
prime separants. Now $26$ is represented by the form $x^2 - 10y^2$, so
we find $26 = 6^2 - 10$ and $13 = 2 \cdot 3^2 - 5$. Thus
$3\sqrt{2} + \sqrt{5}$ is a factor of $\sep(k_\gen/L)$. Since $\SCl(L) = 1$
there must exist a unit $\eps \in E_L$ such that 
$(3\sqrt{2} + \sqrt{5}\,)\eps \equiv \xi^2 \bmod 4$. We quickly find that 
$\alpha = (3\sqrt{2} + \sqrt{5}\,)(3 + \sqrt{10}\,)(2+\sqrt{5}\,)$ is
$2$-primary (coprime to $(2)$ and congruent to a square modulo $4$), 
hence its square root generates an unramified quadratic extension of
$k_\gen$ whose Galois group over $k$ is the quaternion group of order $8$.  
The minimal polynomial of $\sqrt{\alpha}$ is
$f(x) = x^8 - 180x^6 + 1854x^4 - 2340x^2 + 169$.

\section{Genus theory}

Genus theory of quadratic extensions $k/F$ of number fields with trivial
separant class group can be made as explicit as over $\Q$: the factorization
of $\delta = \sep(k/F)$ into prime separants gives us the genus class field
as well as the genus characters.

\begin{lem}
  Let $F$ be a number field with odd class number in the strict sense.
  Let $K = F(\sqrt{\delta}\,)$ and assume that
  $L = F(\sqrt{\delta_1},\sqrt{\delta_2}\,)$ is a quadratic extension of $K$,  
  were $\delta$, $\delta_1$ and $\delta_2$ are separants. Then $L/K$ is
  unramified at all finite primes if and only if
  $\delta E_F^2 = \delta_1\delta_2 E_F^2$.
\end{lem}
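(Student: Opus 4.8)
The plan is to prove the equivalence by analyzing when the quadratic extension $L/K$ is unramified at all finite primes, using the separant machinery developed in the previous sections. First I would set up the situation: $K = F(\sqrt{\delta}\,)$ and $L = F(\sqrt{\delta_1}, \sqrt{\delta_2}\,)$, so that $L/K$ is at most a $(\Z/2\Z)^2$-extension of $F$ containing $K$. For $L/K$ to be quadratic we need $\sqrt{\delta} \in L$, i.e., $\delta \equiv \delta_1^a \delta_2^b \bmod \Fts$ for some $(a,b) \ne (0,0)$; after relabelling the $\delta_i$ (which does not affect the statement) we may assume $\delta E_F^2$ equals one of $\delta_1 E_F^2$, $\delta_2 E_F^2$, or $\delta_1\delta_2 E_F^2$. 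The claim is then that the \emph{correct} relation — the one making $L/K$ unramified at all finite primes — is precisely $\delta E_F^2 = \delta_1\delta_2 E_F^2$.

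For the direction $(\Leftarrow)$, assume $\delta E_F^2 = \delta_1\delta_2 E_F^2$. Then $K = F(\sqrt{\delta_1\delta_2}\,)$ is the third quadratic subfield of $L/F$, and the three quadratic subfields of $L/F$ are $F(\sqrt{\delta_1}\,)$, $F(\sqrt{\delta_2}\,)$, $F(\sqrt{\delta_1\delta_2}\,) = K$. A finite prime $\fp$ of $F$ is unramified in $L/K$ if and only if its ramification index in $L/F$ is at most $2$; since $\fp$ is unramified in $K = F(\sqrt{\delta_1\delta_2}\,)$, it is automatically unramified in $L/K$ — here I would invoke the standard fact that in a $(\Z/2\Z)^2$-extension, a prime ramifying with full index $4$ in $L/F$ would have to be ramified in all three quadratic subfields, but a prime with ramification index $2$ in $L/F$ is unramified in exactly one of the three subfields and ramified in the other two. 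So if $\fp$ ramifies in $L/F$ at all, it is unramified in exactly one quadratic subfield. The content of the hypothesis $\delta E_F^2 = \delta_1\delta_2 E_F^2$ is exactly that $\sep(K/F) = \delta_1 E_F^2 * \delta_2 E_F^2$ is the star-product; one checks that the primes ramifying in $K/F$ are exactly those ramifying in precisely one of $F(\sqrt{\delta_1}\,)/F$ and $F(\sqrt{\delta_2}\,)/F$, while a prime ramifying in both of the latter is unramified in $K/F$ and has ramification index $2$ in $L/F$, hence is unramified in $L/K$. Thus every finite prime is unramified in $L/K$.

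For $(\Rightarrow)$, suppose $L/K$ is unramified at all finite primes but, for contradiction, $\delta E_F^2 \in \{\delta_1 E_F^2, \delta_2 E_F^2\}$, say $\delta E_F^2 = \delta_1 E_F^2$, so $K = F(\sqrt{\delta_1}\,)$. Then the three quadratic subfields of $L/F$ are $K = F(\sqrt{\delta_1}\,)$, $F(\sqrt{\delta_2}\,)$, $F(\sqrt{\delta_1\delta_2}\,)$. Take a prime $\fp$ ramifying in $F(\sqrt{\delta_2}\,)/F$ — such a prime exists provided $\delta_2 E_F^2 \ne 1$, and if $\delta_2 E_F^2$ is trivial the statement degenerates and is handled separately. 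If $\fp$ is unramified in $K/F = F(\sqrt{\delta_1}\,)/F$, then $\fp$ ramifies in $L/F$ with index $2$ and is unramified in exactly one subfield, namely $K$; so $\fp$ ramifies in both $F(\sqrt{\delta_2}\,)$ and $F(\sqrt{\delta_1\delta_2}\,)$, and $\fp$ is ramified in $L/K$ — contradiction. The remaining case is that \emph{every} prime ramifying in $F(\sqrt{\delta_2}\,)/F$ also ramifies in $K/F$; combined with primes-ramifying-in-$\delta_1$ possibly not in $\delta_2$, one gets that $F(\sqrt{\delta_2}\,)/K$ can only be unramified at finite primes if $\delta_2 E_F^2 = \delta_1\delta_2 E_F^2$ as separants, i.e. $\delta_1 E_F^2 = E_F^2$, forcing $\delta_1$ a square — contradicting that $\delta_1$ is a separant of a genuine quadratic extension (the base field $F$ has odd strict class number so $\Sel_4(F)$ is trivial and there are no nontrivial everywhere-finitely-unramified quadratic extensions). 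This pins down $\delta E_F^2 = \delta_1\delta_2 E_F^2$.

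The main obstacle I anticipate is the careful bookkeeping at primes above $2$: the star-product $*$ on separants is \emph{not} the naive product there (exactly as $(-8) * (-4) = 8$ over $\Q$), so the clean "ramified in exactly one of three subfields" dichotomy must be checked to survive at dyadic primes. The right tool is the conductor–discriminant formula together with Proposition~\ref{PD4}: a Dirichlet (equivalently Kronecker) character has even conductor exponent at $\fl \mid 2$ precisely when it lies in $\Dir_4(F)$, and the hypothesis that $F$ has odd strict class number guarantees (via the triviality of $\Sel_4(F)$) that there is no "hidden" everywhere-finitely-unramified quadratic extension to spoil the argument. So the skeleton is: (i) reduce to the three-subfield picture; (ii) run the ramification-index dichotomy at odd primes; (iii) handle dyadic primes via the conductor of $\kappa_{\delta_1}\kappa_{\delta_2} = \kappa_\delta$ and the characterization of $\Dir_4$; (iv) use $\Sel_4(F) = 1$ to exclude the degenerate possibility that some $\delta_i$ is trivial.
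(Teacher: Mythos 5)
There is a genuine gap, and it sits exactly at the point where the lemma has content. Your opening reduction passes from ``$\sqrt{\delta}\in L$, i.e.\ $\delta\equiv\delta_1^a\delta_2^b\bmod \Fts$'' to ``we may assume $\delta E_F^2$ equals one of $\delta_1E_F^2$, $\delta_2E_F^2$, $\delta_1\delta_2E_F^2$.'' That jump from equivalence modulo $\Fts$ to equivalence modulo $E_F^2$ is not free: field equality $F(\sqrt{\delta}\,)=F(\sqrt{\delta_1\delta_2}\,)$ only gives $\delta=\delta_1\delta_2\gamma^2$ with $\gamma\in F^\times$, and the whole point of the lemma is that the stronger statement $\gamma\in E_F$ holds precisely when $L/K$ is unramified at the finite primes. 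By assuming the stronger equivalence at the outset, your $(\Rightarrow)$ direction begs the question: after eliminating the other two options you conclude $\delta E_F^2=\delta_1\delta_2E_F^2$, but without the unjustified reduction you would only have $\delta\Fts=\delta_1\delta_2\Fts$. The paper closes exactly this gap by a short global computation you never perform: from the conductor--discriminant formula, $L/K$ unramified at finite primes forces $\disc(k_1/F)\disc(k_2/F)=\disc(K/F)$; raising to the odd power $h$ converts this into the ideal identity $(\delta_1\delta_2)=(\delta)$, so $\eps=\delta_1\delta_2/\delta$ is a \emph{unit}, and since it is also a square in $F^\times$ (the two elements generate the same quadratic field) it lies in $E_F^2$. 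Note that this is where the class-number hypothesis really enters --- via $\disc(K/F)^h=(\delta)$ --- whereas in your write-up it is used only to exclude degenerate cases.

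Two further points. First, your prime-by-prime $V_4$ ramification analysis for $(\Leftarrow)$ is workable at odd primes, but the clean route there is again the discriminant identity: $\delta E_F^2=\delta_1\delta_2E_F^2$ gives $(\delta)=(\delta_1\delta_2)$, hence $\disc(K/F)=\disc(k_1/F)\disc(k_2/F)$ ($h$ odd), hence $\disc(L/F)=\disc(K/F)^2$ and $L/K$ is unramified at all finite primes --- dyadic primes included, with none of the $\Dir_4$ bookkeeping you defer to step (iii). Second, the ``remaining case'' of your $(\Rightarrow)$ argument reaches a false conclusion: with $F=\Q$, $\delta=\delta_1=pq$, $\delta_2=p$ for primes $p\equiv q\equiv 1\bmod 4$, every prime ramified in $\Q(\sqrt{p}\,)$ is ramified in $K=\Q(\sqrt{pq}\,)$ and $L/K$ is unramified at all finite primes, yet $\delta_1$ is not a square. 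This configuration is excluded by the paper's (implicit) normalization that $K$ is the third quadratic subfield $F(\sqrt{\delta_1\delta_2}\,)$ of $L/F$; your attempt to handle it head-on produces an incorrect deduction rather than the intended contradiction.
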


\begin{proof}
  Let $k_1 = F(\sqrt{\delta_1}\,)$, $k_2 = K(\sqrt{\delta_2}\,)$
  and $K = F(\sqrt{\delta_1\delta_2}\,) = F(\sqrt{\delta}\,)$ be the
  three quadratic subextensions of $K/F$. By the conductor-discriminant
  formula we have
  $$ \disc (L/F) = \disc (k/F) \cdot \disc (k_1/F) \cdot \disc (k_2/F). $$
  Since $L/K$ is unramified, we have $\disc(L/F) = \disc(k/F)^2$. This
  implies $\disc (k_1/F) \cdot \disc (k_2/F) = \disc(k/F)$. Since
  separants are $h$-th powers of the relative discriminants, we must
  have $\delta_1 \cdot \delta_2 = \eps \delta$. Since
  $F(\sqrt{\delta_1\delta_2}\,) = F(\sqrt{\delta}\,)$, the unit
  $\eps$ must be a square, and this implies that
  $\delta_1 E^2 \cdot \delta_2 E^2 = \delta E^2$ as claimed.
\end{proof}

Let $F$ be a number field with $\SCl(F) = 1$. For a quadratic extension
$K/F$ with separant $\sep(K/F) = \delta E_F^2$ let
$ \delta E_F^2 = \delta_1 \cdots \delta_t E_F^2$
denote the factorization of $\sep(K/F)$ into prime separants.
We already know that this factorization determines the genus field of $K/F$,
and that the genus field in the strict sense is
$K_\gen^+ = F(\sqrt{\delta_1}, \ldots, \sqrt{\delta_t}\,)$.

Now let $c \in \Cl(K)$ be an ideal class, pick an ideal $\fa \in c$ coprime to
$\delta$. Then $\chi_j(c) = (\frac{\delta_j}{N_{K/F} \fa})$ is a well
defined quadratic character on the class group. The characters $\chi_j$
are called genus characters of $K/F$.

\begin{thm}[Principal Genus theorem]
  Let $F$ be a number field with trivial separant class group. Then
  an ideal class $c \in \Cl^+(K)$ is a square if and only if
  $\chi_j(c) = +1$ for all genus characters $\chi_j$.
\end{thm}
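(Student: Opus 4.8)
The plan is to express the genus characters $\chi_j$ through the Artin map, identify $\bigcap_j\ker\chi_j$ with the kernel of the Artin map of the genus field $K_\gen^+/K$, and then show that $K_\gen^+$ is precisely the maximal elementary abelian $2$-extension of $K$ that is unramified at all finite primes, so that this kernel is $\Cl^+(K)^2$. I use the hypothesis $\SCl(F)=1$ in the form supplied by Theorem~\ref{TM1} (namely $F$ totally real with odd class number in the strict sense) together with the facts recalled above that $K_\gen^+=F(\sqrt{\delta_1},\ldots,\sqrt{\delta_t}\,)$ for the prime separant factorisation $\delta E_F^2=\delta_1\cdots\delta_t E_F^2$ of $\sep(K/F)$, and that $K_\gen^+$ is the maximal abelian extension of $F$ unramified over $K$ at all finite primes.

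\emph{Step 1.} Put $L_j=K(\sqrt{\delta_j}\,)=F(\sqrt{\delta},\sqrt{\delta_j}\,)$; since $L_j\subseteq K_\gen^+$ and $K_\gen^+/K$ is unramified at all finite primes, so is $L_j/K$, and hence the Artin map of $L_j/K$ factors through the strict class group $\Cl^+(K)$. By functoriality of the Artin map under the norm, for $\fa\in c$ coprime to $\delta$ and under the identification $\Gal(L_j/K)\simeq\{\pm1\}$,
$$ \Big(\frac{L_j}{K},\fa\Big)=\Big(\frac{F(\sqrt{\delta_j}\,)}{F},N_{K/F}\fa\Big)=\Big(\frac{\delta_j}{N_{K/F}\fa}\Big)=\chi_j(c), $$
with $\chi_j$ trivial if $\sqrt{\delta_j}\in K$. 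Since $\delta E_F^2=\delta_1\cdots\delta_t E_F^2$, the compositum of the $L_j$ is $F(\sqrt{\delta_1},\ldots,\sqrt{\delta_t}\,)=K_\gen^+$, so $\bigcap_j\ker\chi_j$ is exactly the kernel of the surjective Artin map $\Cl^+(K)\lra\Gal(K_\gen^+/K)$; in particular $\Cl^+(K)/\bigcap_j\ker\chi_j\simeq\Gal(K_\gen^+/K)$, and since the $\chi_j$ are quadratic, $\Cl^+(K)^2\subseteq\bigcap_j\ker\chi_j$.

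\emph{Step 2.} Let $\widetilde K$ be the fixed field of $\Cl^+(K)^2$ inside the Hilbert class field of $K$ in the strict sense, so that $\Gal(\widetilde K/K)\simeq\Cl^+(K)/\Cl^+(K)^2$ and $\widetilde K$ is the maximal elementary abelian $2$-extension of $K$ unramified at all finite primes. As $K_\gen^+/K$ has these properties, $K_\gen^+\subseteq\widetilde K$, and I claim equality. The subgroup $\Cl^+(K)^2$ is characteristic, hence stable under $\Gal(K/F)=\langle\sigma\rangle$, so $\widetilde K/F$ is Galois. Because $h^+(F)$ is odd, the natural map $j\colon\Cl^+(F)\lra\Cl^+(K)$ has image of odd order; combined with the identity $c\cdot c^{\sigma}=j\big(N_{K/F}(c)\big)$, which follows from $\fa\fa^{\sigma}=(N_{K/F}\fa)\cO_K$, this gives $c^{\sigma}=c^{-1}$ for every $c$ in the $2$-Sylow subgroup $\Cl_2^+(K)$. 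Hence $\sigma$ acts trivially on $\Cl^+(K)/\Cl^+(K)^2\simeq\Cl_2^+(K)/\Cl_2^+(K)^2$, so the exact sequence
$$ 1\lra\Cl^+(K)/\Cl^+(K)^2\lra\Gal(\widetilde K/F)\lra\Gal(K/F)\lra 1 $$
is central with trivial conjugation action; as $\Gal(K/F)$ is cyclic, $\Gal(\widetilde K/F)$ is abelian. Since $K_\gen^+$ is the maximal abelian extension of $F$ unramified over $K$ at the finite primes, $\widetilde K\subseteq K_\gen^+$, so $\widetilde K=K_\gen^+$ and $\Gal(K_\gen^+/K)\simeq\Cl^+(K)/\Cl^+(K)^2$.

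\emph{Step 3 and the main obstacle.} Combining Steps 1 and 2, the isomorphism $\Cl^+(K)/\bigcap_j\ker\chi_j\simeq\Gal(K_\gen^+/K)\simeq\Cl^+(K)/\Cl^+(K)^2$ shows $\bigcap_j\ker\chi_j$ has the same index as $\Cl^+(K)^2$; as it contains $\Cl^+(K)^2$, we get $\bigcap_j\ker\chi_j=\Cl^+(K)^2$, which is the assertion. I expect Step 2 — the equality $\widetilde K=K_\gen^+$, i.e.\ that the maximal elementary abelian $2$-extension of $K$ unramified at the finite primes is already abelian over $F$ — to be the main obstacle, and it is exactly here that $\SCl(F)=1$ is indispensable (through the oddness of $h^+(F)$, which makes $\sigma$ invert $\Cl_2^+(K)$). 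An alternative to this structural argument would be to combine Theorem~\ref{TM2}\,(6), which gives $(K_\gen^+:K)=2^{t-1}$, with the ambiguous class number formula over $F$ (which, since $\Cl^+(F)$ has odd order, bounds the $2$-rank of $\Cl^+(K)$ by $t-1$); together with Step 1 this again forces $\bigcap_j\ker\chi_j=\Cl^+(K)^2$.
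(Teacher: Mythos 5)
Your proof is correct and follows essentially the same route as the paper: both identify the vanishing of all genus characters with membership in the kernel of the Artin map for $K_\gen^+/K$, and then use that $K_\gen^+$ is the maximal elementary abelian $2$-extension of $K$ inside the Hilbert class field in the strict sense, which class field theory matches with $\Cl^+(K)^2$. The only real difference is that your Step 2 supplies a full argument (via the inversion action of $\Gal(K/F)$ on $\Cl_2^+(K)$ and the resulting central, hence abelian, extension) for this last identification, which the paper simply asserts from the oddness of $h^+(F)$.
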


\begin{proof}
  Write $c = [\fp]$ for some prime ideal $\fp$ unramified in $K/F$.
  Clearly $\chi_j(c) = 1$ for all genus characters is equivalent
  to $\fp$ splitting completely in the genus class field.
  Since $F$ has odd class number in the strict sense, $K_\gen^+$ is
  the maximal elementary abelian $2$-extension contained in the
  Hilbert class field in the strict sense. By class field theory,
  $\fp$ splits completely in the maximal elementary abelian
  $2$-extension contained in the  Hilbert class field in the strict sense
  if and only if the ideal class $[\fp]$ is a square in the class
  group $\Cl_2^+(k)$.
\end{proof}

\subsection*{Example.}
Consider the quadratic extension $k = F(\sqrt{93 + 50 \sqrt{2}}\,)$ of
$F = \Q(\sqrt{2}\,)$. The separant
$\delta = \sep(k/F) = 4(93 + 50 \sqrt{2}\,) E_F^2$ has the factorization
$$ 4(93 + 50 \sqrt{2}\,) = -4(-11 - 4\sqrt{2}\,)(7 + 2\sqrt{2}\,) $$
into prime separants (modulo squares of units). Thus 
$$ K_\gen = K\Big(\sqrt{7+2\sqrt{2}}\,\Big) \quad \text{and} \quad
   K_\gen^+ = K\Big(\sqrt{-1}, \ \sqrt{7+2\sqrt{2}}\,\Big), $$
hence $\Cl_2(k)$ has rank $1$ and $\Cl_2^+(k)$ has rank $2$.
{\tt pari} tells us that in fact $\Cl(K) = [2]$ and  $\Cl^+(k) = [2,2]$. 

\section{Hilbert's First Supplementary Law}

Hilbert's version of the first supplementary law in number fields
(see \cite[Theorem 6.5]{LemSel}) 
is the following:

\begin{prop}
  Let $\fa$ be an ideal with odd norm in a number field $F$ with class
  number $h$. Then $\fa^h = (\alpha)$, and we can choose $\alpha$
  in such a way that $\alpha \gg 0$ and $\alpha \equiv \xi^2 \bmod 4$
  if and only if $(\frac{\beta}{\fa}) = +1$ for all elements
  $\beta \in \Sel(F)$.
\end{prop}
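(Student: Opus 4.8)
The plan is to interpret the statement as a reciprocity/duality assertion between the Selmer-type group $\Sel(F)$ and the condition that $\fa^h$ has a totally positive, $2$-primary generator, and to prove it by pairing cosets in $M_4/M_4^2$ (enriched with sign data at the infinite places) against $\Sel(F)$ via the quadratic symbol. First I would fix $\fa^h = (\alpha_0)$ for some generator $\alpha_0$; the question is whether the coset $\alpha_0$ can be adjusted by a unit to become $\gg 0$ and $\equiv\xi^2\bmod 4$. Modifying $\alpha_0$ by units moves it inside a coset of the image of $E_F$ in the group $\Clp_4(F)$ (roughly $M_4/M_4^2$ together with the sign vector in $\mu_2^r$ at the real places), so the existence of the desired generator is exactly the vanishing of a well-defined class $[\fa]\in \Clp_4(F)/\langle \text{units}\rangle$, which by Theorem~\ref{TM1} and the surrounding ray-class description is dual to $\Sel(F)$.

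Concretely, the key steps in order: (i) Set up the pairing $\langle\,\cdot\,,\,\cdot\,\rangle$ sending $(\beta,\fa)$ with $\beta\in\Sel(F)$, $\fa$ coprime to $\beta$ and to $2$, to $(\frac{\beta}{\fa})\in\mu_2$; show it is well defined on $\Sel(F)$ (here one uses that $\beta$ generates the square of an ideal, so $\kappa_\beta$ is unramified outside $2\infty$, together with Hilbert reciprocity to kill the contribution away from $\fa$). (ii) Show the pairing descends to the right-hand quotient in the variable $\fa$: replacing $\fa$ by $\fa\fb^2$ or by another ideal in the same class-with-signs-mod-$4$ does not change $(\frac{\beta}{\fa})$ for $\beta\in\Sel(F)$, because $\kappa_\beta$ is a Dirichlet character modulo $4\infty$ (Proposition on Kronecker characters being Dirichlet characters). (iii) Prove nondegeneracy: the radical on the $\Sel(F)$ side consists of $\beta$ with $\kappa_\beta$ trivial on all ideals, i.e. $\beta\in\Sel_4(F)$ — but $\beta\in\Sel(F)$ was taken modulo exactly this, using the exact sequence $1\to\Sel_4(F)\to\Sel(F)\to\Kr_4(F)\to 1$; and the radical on the other side is precisely the subgroup of classes-with-signs represented by a totally positive $2$-primary generator of $\fa^h$, which by the order count $\#\Sel(F)=2^{\rho+r+s}$ versus the order of the relevant ray-class quotient forces a perfect pairing. (iv) Conclude: $\fa^h$ has a generator $\alpha\gg0$ with $\alpha\equiv\xi^2\bmod 4$ iff $[\fa]$ lies in that radical iff $\langle\beta,\fa\rangle=1$ for every $\beta\in\Sel(F)$, which is the claim.

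The main obstacle I expect is step (i)–(ii): making the pairing genuinely well defined, one must check that $(\frac{\beta}{\fa})$ depends only on the class of $\fa$ in the ray class group modulo $(4)\cdot\infty$ and only on $\beta$ modulo $\Sel_4(F)$, and this is where Hilbert's reciprocity law (the product formula for quadratic Hilbert symbols) does the real work — rewriting $(\frac{\beta}{\fa})$ as a product of local symbols and observing that, since $\beta$ generates a square ideal and $\fa$ has odd norm, all local factors outside the primes dividing $2$ and the archimedean places cancel in pairs. Once the pairing is clean, the nondegeneracy and the iff-statement follow formally from the order computations already recorded in the excerpt (the values $\#\Sel(F)=2^{\rho+r+s}$, $\#\Sel_4(F)=2^{\rho^+}$, and $\#\SCl(F)=2^{\rho^++s}$) together with Theorem~\ref{TM1}. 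A secondary subtlety is bookkeeping at the real places: one must carry the sign vector alongside the residue modulo $4$, and verify that ``$\gg 0$'' is controlled by exactly the archimedean components of the same Selmer pairing, so that totally positive $2$-primary generators correspond precisely to the full radical rather than a proper subgroup of it.
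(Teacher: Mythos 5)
First, a point of reference: the paper does not prove this proposition at all --- it is quoted from \cite[Theorem 6.5]{LemSel} and the subsequent variants are stated with ``the proofs are all similar'' --- so there is no internal argument to compare against, and your proposal has to stand on its own. Its general shape (a pairing of $\Sel(F)$ against a ray class group via the quadratic symbol, with the product formula for Hilbert symbols as the engine) is the right one, but two of your steps are genuinely broken. The radical on the Selmer side is not $\Sel_4(F)$: an element $\beta \in \Sel_4(F)$ generates an extension $F(\sqrt{\beta}\,)/F$ that is unramified at all finite primes but in general nontrivial, so $\kappa_\beta$ is a nontrivial character on ideals; the paper's own example $\kappa_{-1}$ over $\Q(\sqrt{-5}\,)$, with $(\frac{-1}{\fp}) = -1$ for $\fp = (3, 1+\sqrt{-5}\,)$, refutes your step (iii), and since the proposition quantifies over all ideals $\fa$ of odd norm (not just principal ones) this matters. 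Relatedly, for $\beta \in \Sel(F)$ with $(\beta)$ divisible by primes above $2$ the conductor of $F(\sqrt{\beta}\,)/F$ need not divide $(4)\infty$ (over $\Q$ the character $\kappa_2$ has conductor $8$), so the pairing does not descend to the ray class group modulo $4\infty$ as claimed in step (ii).

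The more serious gap is that the entire content of the theorem sits in your assertion that ``the order count forces a perfect pairing'' identifying the radical on the ideal side with the classes of those $\fa$ for which $\fa^h$ has a totally positive generator $\equiv \xi^2 \bmod 4$. Equal indices do not identify two subgroups; you still need one containment, and the natural one (generator condition $\Rightarrow$ all symbols $+1$) is exactly where reciprocity must be applied to the generator: $(\frac{\beta}{\fa})^h = (\frac{\beta}{\alpha}) = (\frac{\alpha}{\beta}) = (\frac{\alpha}{\fb})^2 = 1$ for $(\beta) = \fb^2$, which pins down $(\frac{\beta}{\fa})$ only when $h$ is odd. Moreover the order count itself is not carried out and is not automatic: the index of the relevant subgroup of ray classes equals $\#\Sel(F)$ only after one shows that every totally positive unit congruent to a square modulo $4$ is a square, which again rests on oddness of the class number. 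Indeed, taken literally without that hypothesis the statement fails: in $F = \Q(\sqrt{-5}\,)$ with $h = 2$, take $\fa = (3 + 2\sqrt{-5}\,)$, a prime above $29$; then $\fa^2$ is generated by the literal square $(3+2\sqrt{-5}\,)^2$, yet $2 \in \Sel(F)$ and $(\frac{2}{\fa}) = (\frac{2}{29}) = -1$. Since your argument nowhere uses oddness of $h$, it cannot be complete; the missing ideas are precisely the reciprocity computation for the chosen generator and the unit-theoretic input that closes the index count.
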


Here we will briefly mention a few variants of Hilbert's first
supplementary law in number fields with odd class number (the proofs
are all similar), where we can replace the Selmer group by the unit group.

\begin{prop}
  Let $F$ be a number field with odd class number $h$, and $\fa$
  an ideal with odd norm. Then $\fa^h = (\alpha)$ for some
  $\alpha \in F^\times$, and the following assertions are equivalent:

  \begin{center}
    $$ \begin{array}{c|c}
      \rsp \text{conditions on $\fa$} & \text{conditions on $\alpha$} \\ \hline
      \rsp (E/\fa)   = 1 & \alpha \equiv \xi^2 \bmod 4, \ \alpha \gg 0 \\
      \rsp (E_4/\fa) = 1 & \alpha \gg 0 \\
      \rsp (E^+/\fa) = 1 & \alpha \equiv \xi^2 \bmod 4
    \end{array} $$    
  \end{center}
\end{prop}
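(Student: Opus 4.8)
The plan is to run the argument behind the preceding proposition once more, keeping track of which piece of the obstruction each of the test groups $E$, $E_4$, $E^+$ is able to detect. Since $F$ has odd class number $h$, an ideal $\fa$ of odd norm satisfies $\fa^h=(\alpha)$ with $\alpha$ coprime to $2$ and determined up to a unit factor, and the three rows ask whether $\alpha$ can be multiplied by a suitable unit so as to become $2$-primary and totally positive, resp.\ just totally positive, resp.\ just $2$-primary. Because $h$ is odd we have $\Sel(F)\simeq E/E^2$, $\Sel_4(F)\simeq E_4/E^2$, and --- using $E^2\subseteq E^+$ --- also $\Sel^+(F)\simeq E^+/E^2$, where $\Sel^+(F)\subseteq\Sel(F)$ denotes the subgroup having a totally positive representative; so the hypotheses $(E/\fa)=1$, $(E_4/\fa)=1$, $(E^+/\fa)=1$ are literally the assertions that $(\frac{\beta}{\fa})=+1$ for all $\beta$ in $\Sel(F)$, in $\Sel_4(F)$, resp.\ in $\Sel^+(F)$. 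In particular the first row is precisely the previous proposition, so only the second and third need a new argument.

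The computational heart is Hilbert's reciprocity law. Writing $\fa^h=(\alpha)$ and feeding the product formula $\prod_v(\eps,\alpha)_v=1$ --- in which the tame symbols at odd primes vanish or reduce to the Jacobi symbol $(\frac{\eps}{\alpha})$ --- one obtains, for every unit $\eps$,
$$ \Big(\frac{\eps}{\fa}\Big)=\Big(\frac{\eps}{\alpha}\Big)=\prod_{\fp\mid 2}(\eps,\alpha)_\fp\cdot\prod_{v\mid\infty}(\eps,\alpha)_v . $$
If $\eps\in E_4$, then $F_\fp(\sqrt{\eps}\,)/F_\fp$ is unramified for each $\fp\mid2$ while $v_\fp(\alpha)=0$, so the product over $\fp\mid2$ is trivial and $(\frac{\eps}{\fa})$ reduces to $\prod_{v\mid\infty}(\eps,\alpha)_v$, which depends only on the sign vectors of $\eps$ and $\alpha$ at the real places. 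Dually, if $\eps\in E^+$, the archimedean factor is trivial and $(\frac{\eps}{\fa})$ reduces to $\prod_{\fp\mid2}(\eps,\alpha)_\fp$, which depends only on $\eps$ and $\alpha$ modulo $4$. Thus testing $(\frac{\eps}{\fa})$ against $E_4$ sees only the ``sign part'' of the obstruction (whether the signature $s(\alpha)$ lies in the image $s(E)\subseteq\{\pm1\}^r$ of the units), and testing it against $E^+$ sees only the ``mod-$4$ part'' (whether the class $t(\alpha)$ lies in the image $t(E)$ of the units in $M_4/M_4^2$).

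It then remains to verify that the two restricted pairings are perfect: $E_4/E^2$ against $\{\pm1\}^r/s(E)$ via $\prod_{v\mid\infty}(\eps,\alpha)_v$, and $E^+/E^2$ against $(M_4/M_4^2)/t(E)$ via $\prod_{\fp\mid2}(\eps,\alpha)_\fp$. Orthogonality of the relevant subspaces is immediate from the product formula applied to a pair of units (the odd and infinite local symbols, resp.\ the odd and $2$-adic ones, then cancel), so it suffices to match dimensions; for $h$ odd one has $\dim s(E)=r-\rho^+$ and $\dim t(E)=r+s-\rho^+$, and together with $\#\Sel_4(F)=2^{\rho^+}$, with $\dim E^+/E^2=s+\rho^+$, and with the structure sequence relating $\Cl(F)$, $\Cl^+(F)$ and the signs of units, both counts close. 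Granting perfectness, $(\frac{\eps}{\fa})=1$ for all $\eps\in E_4$ forces $s(\alpha)\in s(E)$, i.e.\ $\alpha\eta\gg0$ for some unit $\eta$, and $(\frac{\eps}{\fa})=1$ for all $\eps\in E^+$ forces $t(\alpha)\in t(E)$, i.e.\ $\alpha\eta\equiv\xi^2\bmod4$ for some unit $\eta$ --- exactly the content of rows two and three. The hard part is the nondegeneracy (and even the well-definedness on classes modulo $4$) of the local Hilbert pairing $(\beta,\gamma)\mapsto\prod_{\fp\mid2}(\beta,\gamma)_\fp$ on $M_4/M_4^2$, the delicate ingredient the paper already flags through Hasse's \cite[Satz~16$_2$]{HasZB}; for fields of odd class number, however, it can be extracted from Proposition~\ref{PD4} (which gives $\Dir_4(F)\simeq M_4/M_4^2$ of order $2^n$) together with the known orders of the Selmer groups, so no genuinely new computation at the primes above $2$ is required.
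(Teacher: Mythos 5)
Your argument is correct in substance, and it follows the route the paper itself points to: the paper gives no proof of this proposition at all (it says only that ``the proofs are all similar'' to that of the preceding Selmer-group version quoted from \cite{LemSel}), and that proof is precisely the Hilbert-product-formula and duality computation you carry out. Two points you gloss over deserve to be made explicit. First, for your dimension counts to close you need $E_4\cap E^+=E^2$ when $h$ is odd, so that $s$ is injective on $E_4/E^2$ (giving $\dim s(E_4)=\rho^+$) and $t$ is injective on $E^+/E^2$ (giving $\dim t(E^+)=s+\rho^+$); this does hold --- a totally positive unit congruent to a square modulo $4$ generates a quadratic extension unramified at all places, hence is a square when the class number is odd --- but it is a genuine step, not a formality, and your phrase ``both counts close'' hides it. Second, the nondegeneracy of the pairing $(\beta,\gamma)\mapsto\prod_{\fp\mid 2}(\beta,\gamma)_\fp$ on $M_4/M_4^2$ does not really follow from Proposition \ref{PD4} together with the orders of the Selmer groups; it is a statement of local class field theory: at each dyadic place the radical of the Hilbert pairing on local unit square classes is exactly the class generating the unramified quadratic extension, so the pairing descends nondegenerately to the local factor of $M_4/M_4^2$, which has order $2^{d_\fp}$ with $\sum_{\fp\mid 2}d_\fp=n$. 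With those two insertions your proof is complete and matches the intended one.
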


If $\SCl(F) = 1$, then $E^+ = E^2$, hence $(E^+/\fa) = +1$ for all ideals
with odd norm, and thus every ideal $\fa^h$ is generated by an element
$\alpha \equiv \xi^2 \bmod 4$ as predicted by our results above.

In quadratic number fields $F = \Q(\sqrt{m}\,)$ with $m \equiv 3 \bmod 4$
we have $-1 \in E_4$ and $(\frac{-1}{\fa}) = (-1)^{(N\fa -1)/2}$,
hence $(\frac{-1}{\alpha}) = +1$ for totally positive elements with norm
$\equiv 1 \bmod 4$. Thus ideals with norm $\equiv 1 \bmod 4$ have a totally
positive generator, and ideals with norm $\equiv 3 \bmod 4$ do not.
In $\Q(\sqrt{3}\,)$, the prime ideal above $13 \equiv 1 \bmod 4$ is
generated by $4 + \sqrt{3} \gg 0$, and the prime ideal with norm $11$ by
$1 + 2\sqrt{3}$. Next $2(2+\sqrt{3}\,) = (1 + \sqrt{3}\,)^2$, hence
$(\frac{\eps}{\fa}) = (\frac{2}{N\fa})$, and $(E/\fp) = +1$ if and
only if $N\fp \equiv 1 \bmod 8$. This implies that the prime ideal
above $13$ does not have a totally positive $2$-primary generator
(we have $(4 + \sqrt{3}\,)(2+\sqrt{3}\,) = 11 + 6\sqrt{3}$, which is
not congruent to a square modulo $4$), but that the prime ideal above
$73$ has, and in fact $11 + 4 \sqrt{3}$ is such a generator.

\subsection*{Final Remarks}

Separants were used in \cite{Lem95} for constructing unramified $2$-extensions
of quadratic extensions $K/F$ of fields $F$ with $\SCl(F) = 1$. The simplest
result was that if $F$ is a number field with $\SCl(F) = 1$ and
$k/F$ a quadratic extension with separant $\delta E_F^2$, then there is
an unramified cyclic quartic extension $K/k$ if and only if $\delta$ admits
a $C_4$-factorization, i.e., if  $\delta E_F^2 = \delta_1 \delta_2 E_F^2$
with $(\delta_1/\fp_2) = (\delta_2/\fp_1)$ for all prime ideals
$\fp_i \mid \delta_j$. In this case, the extension can be constructed
explicitly by solving the equation $\delta_1 x^2 + \delta_2 y^2 = z^2$
in $F^\times$.

I expect that the main results of R\'edei (R\'edei matrix, triple symbol
reciprocity; see Stevenhagen \cite{Stev}), Koch \cite{Koch},
Fr\"ohlich \cite{Fro2} and the existence of governing fields generalize
to all number fields with trivial separant class group.
Kuramoto \cite{Kura} recently has  discussed the R\'edei symbol in
real quadratic fields with class number $1$.

\subsection*{Acknowledgements}

I thank the referees for their careful reading of the manuscript
and numerous very helpful remarks and corrections. I also thank
Chip Snyder for his comments.

\end{document}